\def\section{\@startsection{section}{1}%
	\z@{.7\linespacing\@plus\linespacing}{.5\linespacing}%
	{\bfseries
		\centering
}}
\def\@secnumfont{\bfseries}
\newcommand\reallywidehat[1]{%
	\savestack{\tmpbox}{\stretchto{%
			\scaleto{%
				\scalerel*[\widthof{\ensuremath{#1}}]{\kern-.6pt\bigwedge\kern-.6pt}%
				{\rule[-\textheight/2]{1ex}{\textheight}}
			}{\textheight}%
		}{0.5ex}}%
	\stackon[1pt]{#1}{\tmpbox}%
}
\newtheorem{theorem}{Theorem}[section]
\newaliascnt{lemma}{theorem}
\newtheorem{lemma}[lemma]{Lemma}
\newaliascnt{proposition}{theorem}
\newaliascnt{assumption}{theorem}
\newtheorem{assumption}[assumption]{Assumption}
\newaliascnt{corollary}{theorem}
\newaliascnt{definition}{theorem}
\newtheorem{definition}[definition]{Definition}
\newaliascnt{example}{theorem}
\newtheorem{example}[example]{Example}
\newaliascnt{remark}{theorem}
\newtheorem{remark}[remark]{Remark}
\newaliascnt{hypothesis}{theorem}
\newaliascnt{property}{theorem}
\let\originalleft\left
\let\originalright\right
\renewcommand{\left}{\mathopen{}\mathclose\bgroup\originalleft}
\renewcommand{\right}{\aftergroup\egroup\originalright}
\newcommand{\doublewidetilde}[1]{{%
		\mathpalette\double@widetilde{#1}%
}}
\newcommand{\double@widetilde}[2]{%
	\sbox\z@{$\m@th#1\widetilde{#2}$}%
	\ht\z@=.9\ht\z@
	\widetilde{\box\z@}%
}
\renewcommand{\d}{\/\mathrm{d}\/}
\def\w{\textbf{W}^{\varepsilon}_{{\theta}^{\varepsilon}}}
\def\A{\mathrm{A}}
\def\F{\mathrm{F}}
\def\C{\mathrm{C}}
\def\f{\mathbf{f}}
\def\J{\mathrm{J}}
\def\B{\mathrm{B}}
\def\D{\mathrm{D}}
\def\X{\mathbb{X}}
\def\z{\mathbf{z}}
\def\v{\mathbf{v}}
\def\V{\mathbb{V}}
\def\w{\mathbf{w}}
\def\W{\mathrm{W}}
\def\G{\mathbb{G}}
\def\V{\mathbb{V}}
\def\U{\mathrm{U}}
\def\u{\mathbf{u}}
\def\H{\mathbb{H}}
\def\p{\mathbf{p}}
\newcommand{\R}{\mathbb{R}}
\renewcommand{\d}{\/\mathrm{d}\/}
\newcommand{\Addresses}{{
		\footnote{

			\noindent \textsuperscript{1}School of Mathematics,
			Indian Institute of Science Education and Research, Trivandrum (IISER-TVM),
			Maruthamala PO, Vithura, Thiruvananthapuram, Kerala, 695 551, INDIA.  \par\nopagebreak \noindent
			\textit{e-mail:} \texttt{tania9114@iisertvm.ac.in}

			\noindent \textsuperscript{2}School of Mathematics, Indian Institute of Science Education and Research, Trivandrum (IISER-TVM),
			Maruthamala PO, Vithura, Thiruvananthapuram, Kerala, 695 551, INDIA  \par\nopagebreak \noindent
			\textit{e-mail:} \texttt{sheetal@iisertvm.ac.in}

			\noindent \textsuperscript{3}Department of Mathematics, Indian Institute of Technology (IIT), Roorkee,
			Haridwar Highway, Roorkee,  Uttarakhand 247667, INDIA \par\nopagebreak
			\noindent  \textit{e-mail:} \texttt{maniltmohan@gmail.com, manil$\_$vs@isibang.ac.in}
			
			\noindent \textsuperscript{*}Corresponding author.

			\medskip\noindent
			{\bf Acknowledgments:}  Tania Biswas  would like to thank the  Indian Institute of Science Education and Research, Thiruvananthapuram, for providing financial support and stimulating environment for the research. M. T. Mohan would like to thank the Department of Science and Technology (DST), India for Innovation in Science Pursuit for Inspired Research (INSPIRE) Faculty Award (IFA17-MA110) and Indian Institute of Technology (IIT), Roorkee, for providing stimulating scientific environment and resources.
			
}}}
\begin{document}
	
	\title[Second order optimality conditions]{Second order optimality conditions for optimal control problems governed by 2D nonlocal Cahn-Hillard-Navier-Stokes equations	\Addresses	}

	\author[T. Biswas, S. Dharmatti and M. T. Mohan ]
	{Tania Biswas\textsuperscript{1}, Sheetal Dharmatti\textsuperscript{2*} and Manil T. Mohan\textsuperscript{3}}

	\maketitle

	\begin{abstract}
		In this paper, we formulate a distributed optimal control problem related to the evolution of two isothermal, incompressible, immisible fluids in a two dimensional  bounded domain. The distributed optimal control problem is framed as the minimization of a suitable cost functional subject to the controlled nonlocal Cahn-Hilliard-Navier-Stokes equations.   We describe the first order necessary conditions of optimality via Pontryagin's minimum principle and prove second order necessary and sufficient conditions of optimality for the problem.
	\end{abstract}

	\keywords{\textit{Key words:} optimal control, nonlocal Cahn-Hilliard-Navier-Stokes systems, Pontryagin's maximum principle, necessary and sufficient optimality conditions. }
	
	Mathematics Subject Classification (2010): 49J20, 35Q35, 76D03.

	\section{Introduction}\label{sec1}\setcounter{equation}{0} Optimal  control theory of fluid dynamic models has been one of the important areas of applied mathematics with  several applications in engineering  and technology (see for example \cite{sritharan,fursikov,gunzburger}). The mathematical developments in infinite dimensional nonlinear system theory and partial differential equations opened up a new dimension for the optimal control theory of fluid dynamic models.
	In this work, we prove a second order necessary and sufficient conditions of optimality for an optimal control problem governed by Cahn-Hilliard-Navier-Stokes system in a two dimensional  bounded domain. We have considered a general cost functional and obtained the first and second order  necessary and sufficient optimality conditions. The first order necessary condition in the form of Pontryagin's maximum principle is established in \cite{ControlCHNS,BDM}.

	\emph{Cahn-Hilliard-Navier-Stokes system} describes the evolution of an incompressible, isothermal mixture of two immiscible fluids in a domain $ \Omega\subset\R^2 $ or $ \R^3$.   It is a coupled system with two unknowns: the average velocity of the fluid which is denoted by $\u(x,t)$ and the relative concentration of one fluid, denoted by $\varphi(x,t),$ for $(x,t)\in\Omega\times(0,T)$. The mathematical equations of the system are given by:
	
	\begin{subequations}
		\begin{align}
		\varphi_t + \u\cdot \nabla \varphi &= \Delta \mu, \label{nonlin phi}\ \text{ in }\ \Omega\times(0,T),\\
		\mu &= a \varphi - \J\ast \varphi + \F'(\varphi), \label{mu}\ \text{ in }\ \Omega\times(0,T),\\
		\u_t - \nu \Delta \u + (\u\cdot \nabla )\u + \nabla \uppi &= \mu \nabla \varphi + \mathbf{f} +\U, \label{nonlin u}\ \text{ in }\ \Omega\times(0,T),\\
		\text{div }\u&= 0, \ \text{ in }\ \Omega\times(0,T), \label{div zero}\\
		\frac{\partial \mu}{\partial \mathbf{n}} &= 0, \ \u=0, \ \text{ on } \ \partial \Omega \times (0,T),\label{boundary conditions}\\
		\u(0) &= \u_0, \  \ \varphi(0) = \varphi _0, \ \text{ in } \ \Omega. \label{initial conditions}
		\end{align}   
	\end{subequations}
	We restrict ourselves to dimension two, that is  we assume the evolution happens in,  $\Omega \subset \mathbb{R}^2$ which is a bounded domain with sufficiently smooth boundary. Here the unit outward normal to the boundary $\partial\Omega $ is  denoted by $\mathbf{n}$, $\mu$ is the \emph{chemical potential}, $\uppi$ is the \emph{pressure}, $\J$ is the \emph{spatial-dependent internal kernel}, $\J \ast \varphi$ denotes the spatial convolution over $\Omega$, $a$ is defined by $a(x) := \int _\Omega \J(x-y) \d y$, $\F$ is a double well potential, $\nu$ is the \emph{kinematic viscosity} and $\mathbf{f}$ is the \emph{external forcing} acting in the mixture. In the system (\ref{nonlin phi})-(\ref{initial conditions}),  $\U$ is the \emph{distributed control} acting in the velocity equation.
	The density is supposed to be constant and is equal to one (i.e., matched densities). The system (\ref{nonlin phi})-(\ref{initial conditions}) is  called nonlocal because of the term $\J$, which is averaged over the spatial domain. Various simplified models of this system are studied by several mathematicians and physicists.   The local version of the system  is obtained by replacing  $\mu $ equation by $ \mu = \Delta \varphi + \F' (\varphi) $. However, the nonlocal version is physically more relevant and mathematically challenging too. This model is more difficult to handle because of the nonlinear terms like the \emph{capillarity term} (i.e., \emph{Korteweg force}) $\mu \nabla \varphi$ acting on the fluid.  Even in two dimensions, this term can be less regular than the convective term $(\u\cdot\nabla)\u$ (see \cite{weak}). 
	
	Recently, the solvability of the system (\ref{nonlin phi})-(\ref{initial conditions}) has been studied extensively (see for example \cite{weak,unique,strong}, etc). Moreover, many results are available in the literature for simplified Cahn-Hilliard-Navier-Stokes models. In \cite{weak}, the authors  proved the existence of a weak solution for the system (\ref{nonlin phi})-(\ref{initial conditions}). The uniqueness of weak solution for such systems remained open until  2016 and the authors in \cite{unique} resolved it for dimension 2.   The existence of a unique strong solution in two dimensions for the system (\ref{nonlin phi})-(\ref{initial conditions}) is proved in \cite{strong}. As in the case of 3D Navier-Stokes, in three dimensions, the existence of a weak solution is known (see \cite{weak}), but the uniqueness of the weak solution  for the nonlocal Cahn-Hilliad-Navier-Stokes system remains open.

	The first order necessary conditions of optimality for various optimal control problems governed by nonlocal Cahn-Hilliad-Navier-Stokes system has been established in \cite{ControlCHNS,BDM,SFMG,BDM2}, etc. In \cite{BDM}, the authors have studied a distributed optimal control problem for nonlocal Cahn-Hilliard-Navier-Stokes system and established the Pontryagin’s maximum principle. They have characterized the optimal control using the adjoint variable. A similar kind of problem is considered in \cite{ControlCHNS}, where the authors proved the first order optimality condition under some restrictive condition on the spatial operator $\J$ (see (H4) in \cite{ControlCHNS}). A distributed optimal  control problem for two-dimensional nonlocal Cahn-Hilliard-Navier-Stokes systems with degenerate mobility and singular potential is described in \cite{SFMG}. An optimal initialization problem, a type of data assimilation problem is examined in \cite{BDM2}. A  distributed optimal control of a diffuse interface model of tumor growth is considered in \cite{tumor}. The model studied in \cite{tumor} is  a kind of local Cahn-Hilliard-Navier-Stokes type system with some additional conditions on $\F$. Optimal control problems with state constraint and robust control for local Cahn-Hilliard-Navier-Stokes system are investigated in \cite{robust, state const}, respectively. 
	
	In this paper, we formulate a distributed optimal control problem and establish a second order necessary and sufficient condition of optimality for the non local Cahn-Hilliard-Navier-Stokes system in a two dimensional bounded domain. The unique global strong solution of the system (\ref{nonlin phi})-(\ref{initial conditions}) established in \cite{strong} helps us to achieve this goal. The second order optimality condition for 3D Navier-Stokes equations in a  periodic domain is established in \cite{LijuanPezije}. The second order optimality condition for various optimization problems governed by Navier-Stokes equations is obtained in \cite{CT,TW,WD}, etc.  
	
	The paper is organized as follows: In the next section, we discuss the functional setting for the unique  solvability of the system (\ref{nonlin phi})-(\ref{initial conditions}). We also state the existence and uniqueness of a weak as well as strong solution of the system  (see Theorems \ref{exist}, \ref{unique} and \ref{strongsol}). The unique solvability of the  linearized system is stated in section \ref{se3} (see Theorem \ref{linearized}). An optimal control problem is formulated in section \ref{se4} as the minimization of a suitable cost functional. A first order necessary condition for optimality via Pontryagin's maximum principle has been established in \cite{BDM}, which is stated again in this section for completeness. We prove our main result namely the second order necessary and sufficient optimality condition for the optimal control problem in this section (see Theorems \ref{necessary} and \ref{sufficient}). We need  the existence and uniqueness of weak solutions of the linearized as well as adjoint system which has already been  proved in \cite{BDM,BDM2}.
	The final term (see \eqref{3.14}), which we observe in the second order optimality condition is due to the strong non linearity and coupling present in the equation. This term is well defined  thanks to a higher regularity established for adjoint system in  \cite{BDM2}.

	\section{Mathematical Formulation}\label{sec2}\setcounter{equation}{0} In this section, we discuss the necessary function spaces required to obtain the global solvability results for (\ref{nonlin phi})-(\ref{initial conditions}), as well as the linearized and adjoint systems. We mainly follow the papers \cite{weak,unique} for the mathematical formulation and functional setting. 
	
	\subsection{Functional setting}
	Let us introduce the following function spaces required for obtaining the unique global solvability results of the system (\ref{nonlin phi})-(\ref{initial conditions}). Note that on the boundary, we  did not prescribe any condition (Dirichlet, Neumann, etc) on the value of  $\varphi$, the relative concentration of one of the  fluid. Instead, we imposed a Neumann boundary condition for the chemical potential $\mu$ on the boundary (see \eqref{boundary conditions}). Let us define
	\begin{align*}\G_{\text{div}} &:= \Big\{ \u \in \mathrm{L}^2(\Omega;\R^2) : \text{div }\u=0, \u\cdot \mathbf{n}\big|_{\partial\Omega}=0\Big\}, \\
	\V_{\text{div}} &:= \Big\{\u \in \mathrm{H}^1_0(\Omega;\R^2): \text{div }\u=0\Big\},\\ \mathrm{H}&:=\mathrm{L}^2(\Omega;\R),\ \mathrm{V}:=\mathrm{H}^1(\Omega;\R). \end{align*}
	Let us denote $\| \cdot \|$ and $(\cdot, \cdot),$ the norm and the scalar product, respectively, on both $\mathrm{H}$ and $\G_{\text{div}}$. The norms on $\G_{\text{div}}$ and $\mathrm{H}$ are given by $\|\u\|^2:=\int_{\Omega}|\u(x)|^2\d x$ and $\|\varphi\|^2:=\int_{\Omega}|\varphi(x)|^2\d x$, respectively. The duality between $\V_{\text{div }}$ and its topological dual $\V_{\text{div}}'$ is denoted by $\langle\cdot,\cdot\rangle$. We know that $\V_{\text{div}}$ is endowed with the scalar product 
	$$(\u,\v)_{\V_{\text{div }}}= (\nabla \u, \nabla \v)=2(\mathrm{D}\u,\mathrm{D}\v),\ \text{ for all }\ \u,\v\in\V_{\text{div}},$$ where $\mathrm{D}\u$ is the strain tensor $\frac{1}{2}\left(\nabla\u+(\nabla\u)^{\top}\right)$.  The norm on $\V_{\text{div }}$ is given by $\|\u\|_{\V_{\text{div }}}^2:=\int_{\Omega}|\nabla\u(x)|^2\d x=\|\nabla\u\|^2$. In the sequel, we use the notations $\mathbb{H}^2(\Omega):=\mathrm{H}^2(\Omega;\mathbb{R}^2)$ and  $\mathrm{H}^2(\Omega):=\mathrm{H}^2(\Omega;\mathbb{R})$ for second order Sobolev spaces.

	\subsection{Weak and strong solution of the system \eqref{nonlin phi}-\eqref{initial conditions}}
	We state the results regarding the existence theorem and uniqueness of  weak and strong solutions for the uncontrolled nonlocal Cahn-Hilliard-Navier-Stokes system  given by \eqref{nonlin phi}-\eqref{initial conditions} with $\U=0$.  
	Let us first make the following assumptions:
	\begin{assumption}\label{prop of F and J} Let
		$\mathrm{J}$ and $\mathrm{F}$ satisfy:
		\begin{enumerate}
			\item [(1)] $ \J \in \W^{1,1}(\mathbb{R}^2;\R), \  \J(x)= \J(-x) \; \text {and} \ a(x) = \int\limits_\Omega \J(x-y)\d y \geq 0,$ a.e., in $\Omega$.
			\item [(2)] $\F \in \C^{2}(\mathbb{R})$ and there exists $C_0 >0$ such that $\F''(s)+ a(x) \geq C_0$, for all $s \in \mathbb{R}$, a.e., $x \in \Omega$.
			\item [(3)] Moreover, there exist $C_1 >0$, $C_2 > 0$ and $q>0$ such that $\F''(s)+ a(x) \geq C_1|s|^{2q} - C_2$, for all $s \in \mathbb{R}$, a.e., $x \in \Omega$.
			\item [(4)] There exist $C_3 >0$, $C_4 \geq 0$ and $r \in (1,2]$ such that $|\F'(s)|^r \leq C_3|\F(s)| + C_4,$ for all $s \in \mathbb{R}$.
		\end{enumerate}
	\end{assumption}
	\begin{remark}\label{remark J}
		Assumption $\J \in \W^{1,1}(\mathbb{R}^2;\R)$ can be weakened. Indeed, it can be replaced by $\J \in \W^{1,1}(\B_\delta;\R)$, where $\B_\delta := \{z \in \mathbb{R}^2 : |z| < \delta \}$ with $\delta := \text{diam}(\Omega)$, or also by
		\begin{eqnarray} \label{Estimate J}
		\sup_{x\in \Omega} \int_\Omega \left( |\J(x-y)| + |\nabla \J(x-y)| \right) dy < +\infty .
		\end{eqnarray}
	\end{remark}
	
	\begin{remark}\label{remark F}
		Since $\F(\cdot)$ is bounded from below, it is easy to see that Assumption \ref{prop of F and J} (4) implies that $\F(\cdot)$ has a polynomial growth of order $r'$, where $r' \in [2,\infty)$ is the conjugate index to $r$. Namely, there exist $C_5$ and $C_6 \geq 0$ such that 
		\begin{eqnarray}\label{2.9}
		|\F(s)| \leq C_5|s|^{r'} + C_6, \text{ for all } s \in \R.
		\end{eqnarray}
		Observe that Assumption \ref{prop of F and J} (4) is fulfilled by a potential of arbitrary polynomial growth. For example, Assumption (2)-(4) are satisfied for the case of the well-known double-well potential $\F (s)= (s^2 - 1)^2$.
	\end{remark}
	The existence of a weak solution for such a system in two and three dimensional bounded domains is established in Theorem 1, Corollaries 1 and 2, \cite{weak}, and the uniqueness for two dimensional case  is  obtained in Theorem 2, \cite{unique}. Under some extra assumptions on $\F$ and $\J$, and enough regularity on the initial data and external forcing,  a unique global strong solution is established in Theorem 2, \cite{strong}.
	\begin{definition}[weak solution]
		Let $\u_0\in\G_{\text{div}}$, $\varphi_0\in\H$ with $\F(\varphi_0)\in\mathrm{L}^1(\Omega)$ and $0<T<\infty$ be given. Then $(\u,\varphi)$ is a \emph{weak solution} to the uncontrolled system ($\U=0$) \eqref{nonlin phi}-\eqref{initial conditions} on $[0,T]$ corresponding to initial conditions $\u_0$ and $\varphi_0$ if 
		\begin{itemize}
			\item [(i)] $\u,\varphi$ and $\mu$ satisfy 
			\begin{equation}\label{sol}
			\left\{
			\begin{aligned}
			&	\u \in \mathrm{L}^{\infty}(0,T;\G_{\text{div}}) \cap \mathrm{L}^2(0,T;\V_{\text{div}}),  \\ 
			&	\u_t \in \mathrm{L}^{2-\gamma}(0,T;\V_{\text{div}}'), \ \text{ for all  }\gamma \in (0,1),  \\
			&	\varphi \in \mathrm{L}^{\infty}(0,T;\mathrm{H}) \cap \mathrm{L}^2(0,T;\mathrm{V}),   \\
			&	\varphi_t \in \mathrm{L}^{2-\delta}(0,T;\mathrm{V}'),\ \text{ for all  }\delta \in (0,1), \\
			&	\mu \in \mathrm{L}^2(0,T;\mathrm{V}),
			\end{aligned}
			\right.
			\end{equation}
			\item [(ii)]  for every $\psi\in\mathrm{V}$, every $\v \in \V_{\text{div}}$, if we define 		             $\rho$ by 
			\begin{align}
			\rho(x,\varphi):=a(x)\varphi+\F'(\varphi),
			\end{align} and for almost any $t\in(0,T)$, we have
			\begin{align}
			\langle \varphi_t,\psi\rangle +(\nabla\rho,\nabla\psi)&=\int_{\Omega}(\u\cdot\nabla\psi)\varphi\d x+\int_{\Omega}(\nabla\J*\varphi)\cdot\nabla\psi\d x,\\
			\langle \u_t,\v\rangle +\nu(\nabla\u,\nabla\v)+b(\u,\v,\w)&=-\int_{\Omega}(\v\cdot\nabla\mu)\varphi\ d x+\langle\f,\v\rangle.
			\end{align}
			\item [(iii)] Moreover, the following initial conditions hold in the weak sense 
			\begin{align}
			\u(0)=\u_0,\ \varphi(0)=\varphi_{0},
			\end{align}
			i.e., for every $\v \in \V_{\text{div}}$, we have $(\u(t),\v) \to (\u_0,\v)$ as $t\to 0$, and for every $\chi \in \mathrm{V}$, we have $(\varphi(t),\chi) \to (\varphi_0,\chi)$ as $t\to  0$.
		\end{itemize}
	\end{definition}
	
	\begin{theorem}[Existence, Theorem 1, Corollaries 1 and 2, \cite{weak}]\label{exist}
		Let the Assumption \ref{prop of F and J} be satisfied. Let $\u_0 \in \G_{\text{div}}$, $\varphi_0 \in \mathrm{H}$ be such that $\F(\varphi_0) \in \mathrm{L}^1(\Omega)$ and $\mathbf{f} \in \mathrm{L}^2_{\text{loc}}([0,\infty), \V_{\text{div}}')$ are given. Then, for every given $T>0$, there exists a weak solution $(u,\varphi)$ to the uncontrolled system  \eqref{nonlin phi}-\eqref{initial conditions} such that \eqref{sol} is satisfied.
		Furthermore, setting
		\begin{align}\mathscr{E}(\u(t),\varphi(t)) = \frac{1}{2} \|\u(t)\|^2 + \frac{1}{4} \int_\Omega \int_\Omega \J(x-y) (\varphi(x,t) - \varphi(y,t))^2 \d x \d y + \int_\Omega \F(\varphi(x,t))\d x,\end{align}
		the following energy estimate holds for almost any $t>0$:
		\begin{align}\label{energy}
		\mathscr{E}(\u(t),\varphi(t)) + \int_0^t \left(\nu \| \nabla \u(s)\|^2 + \| \nabla\mu(s) \|^2 \right)\d s \leq \mathscr{E}(\u_0,\varphi_0) + \int_0^t \langle \mathbf{f}(s), \u(s) \rangle\d s,\end{align}
		and the weak solution $(\u,\varphi)$ satisfies the following energy identity,
		$$\frac{\d}{\d t}\mathscr{E}(\u(t),\varphi(t)) + \nu \|\nabla \u(t) \|^2+ \| \nabla \mu(t) \|^2 = \langle \mathbf{f}(t) , \u(t) \rangle.$$
	\end{theorem}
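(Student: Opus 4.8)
The natural approach is a Faedo--Galerkin approximation followed by uniform energy estimates and a compactness argument; throughout, the structural Assumptions~\ref{prop of F and J} on $\F$ and $\J$ are what make the scheme close. First I would fix Galerkin bases for $\V_{\text{div}}$ (the eigenfunctions of the Stokes operator) and for $\mathrm{V}$ (the eigenfunctions of the Laplacian with homogeneous Neumann data), project the uncontrolled system \eqref{nonlin phi}--\eqref{initial conditions} onto the corresponding $n$-dimensional subspaces, and obtain finite-dimensional approximations $(\u_n,\varphi_n,\mu_n)$. Since $\F\in\C^2(\R)$, the projected system is an ODE with locally Lipschitz nonlinearity, hence has a unique maximal solution; the a priori bounds derived next show it is in fact global on $[0,T]$.

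The heart of the argument is the energy estimate. Testing the discrete $\varphi$-equation with $\mu_n$ and the discrete $\u$-equation with $\u_n$ and adding, the coupling terms $\int_\Omega(\u_n\cdot\nabla\varphi_n)\mu_n\,\d x$ and $\int_\Omega(\u_n\cdot\nabla\mu_n)\varphi_n\,\d x$ cancel (using $\Div\u_n=0$), as does $b(\u_n,\u_n,\u_n)$, and one recovers the differential energy balance for $\mathscr{E}(\u_n,\varphi_n)$; integrating in time gives \eqref{energy} at the Galerkin level. Here Assumption~\ref{prop of F and J}(3) is essential: integrating $\F''(s)+a(x)\geq C_1|s|^{2q}-C_2$ twice shows $\int_\Omega\F(\varphi)\,\d x\gtrsim\|\varphi\|_{\mathrm{L}^{2q+2}}^{2q+2}-C$, which lets one absorb the indefinite nonlocal term $-\frac{1}{2}\int_\Omega(\J\ast\varphi_n)\varphi_n\,\d x$ by Young's inequality and conclude that $\mathscr{E}$ is coercive. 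A Gronwall argument then bounds, uniformly in $n$, the sequence $\u_n$ in $\mathrm{L}^{\infty}(0,T;\G_{\text{div}})\cap\mathrm{L}^2(0,T;\V_{\text{div}})$, $\nabla\mu_n$ in $\mathrm{L}^2(0,T;\mathrm{H})$ and $\varphi_n$ in $\mathrm{L}^{\infty}(0,T;\mathrm{H})$. For the $\mathrm{L}^2(0,T;\mathrm{V})$ bound on $\varphi_n$ I would use $\nabla\mu_n=(a+\F''(\varphi_n))\nabla\varphi_n+(\nabla a)\varphi_n-(\nabla\J)\ast\varphi_n$ together with Assumption~\ref{prop of F and J}(2) and conservation of the spatial mean of $\varphi_n$; and since $\int_\Omega\mu_n\,\d x=\int_\Omega\F'(\varphi_n)\,\d x$, Assumption~\ref{prop of F and J}(4) with the bound on $\int_\Omega\F(\varphi_n)\,\d x$ controls the mean of $\mu_n$, giving $\mu_n$ bounded in $\mathrm{L}^2(0,T;\mathrm{V})$. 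Finally, estimating term by term in the two evolution equations produces bounds for $\partial_t\u_n$ in $\mathrm{L}^{2-\gamma}(0,T;\V_{\text{div}}')$ and for $\partial_t\varphi_n$ in $\mathrm{L}^{2-\delta}(0,T;\mathrm{V}')$; the delicate term is the capillarity term $\mu_n\nabla\varphi_n$, which I would rewrite, modulo a gradient that is absorbed into the pressure, as $-\frac{1}{2}(\nabla a)\varphi_n^2+\varphi_n\big((\nabla\J)\ast\varphi_n\big)$, so that $\|\mu_n\nabla\varphi_n\|_{\V_{\text{div}}'}\lesssim\|\varphi_n\|_{\mathrm{L}^4}^2$, which is integrable in time to the required exponent by the bounds above.

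With these uniform estimates I would extract (Banach--Alaoglu) weakly or weak-$\ast$ convergent subsequences $\u_n\rightharpoonup\u$, $\varphi_n\rightharpoonup\varphi$, $\mu_n\rightharpoonup\mu$ in the respective spaces, and use the Aubin--Lions--Simon lemma together with the time-derivative bounds to upgrade these to strong convergence of $\u_n$ in $\mathrm{L}^2(0,T;\G_{\text{div}})$ and of $\varphi_n$ in $\mathrm{L}^2(0,T;\mathrm{H})$, hence a.e.\ convergence in $\Omega\times(0,T)$ along a further subsequence. Strong convergence handles $b(\u_n,\u_n,\cdot)$ and the reorganized Korteweg term; a.e.\ convergence together with the polynomial growth bound \eqref{2.9} (which makes $\{\F'(\varphi_n)\}$ and $\{\F(\varphi_n)\}$ uniformly integrable) and Vitali's theorem handle the passages $\F'(\varphi_n)\to\F'(\varphi)$ and $\F(\varphi_n)\to\F(\varphi)$. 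Passing to the limit in the Galerkin identities against fixed basis elements, then by density against all $\v\in\V_{\text{div}}$ and $\psi\in\mathrm{V}$, gives part (ii) of the definition of weak solution; part (i) follows from the uniform bounds and weak lower semicontinuity, and the initial conditions in (iii) from the weak-in-time continuity of $\u$ and $\varphi$. The energy inequality \eqref{energy} is then obtained by passing to the limit in the integrated Galerkin energy balance, using lower semicontinuity of the norms and Fatou's lemma for the term $\int_\Omega\F(\varphi)\,\d x$ (legitimate since $\F$ is bounded below).

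The point I expect to require the most care is upgrading \eqref{energy} to the energy \emph{identity}, which needs the limit solution to be an admissible test function in its own equations. Given the regularity \eqref{sol}, and using that Assumptions~\ref{prop of F and J}(2)--(4) force $\F'(\varphi)\in\mathrm{L}^2(0,T;\mathrm{V})$ (so that $\frac{\d}{\d t}\int_\Omega\F(\varphi)\,\d x=\langle\varphi_t,\F'(\varphi)\rangle$ by the chain rule), one tests the $\u$-equation with $\u$ and the $\varphi$-equation with $\mu$, invokes the Lions--Magenes integration-by-parts lemma to justify $\frac{\d}{\d t}\frac{1}{2}\|\u\|^2=\langle\u_t,\u\rangle$, and adds the two identities; the coupling terms cancel exactly as in the Galerkin step, producing the stated differential energy identity, whose integral is \eqref{energy}. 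Besides this, the recurring obstacle is the low regularity of the capillarity term $\mu\nabla\varphi$ --- in two dimensions even less regular than the convective term $(\u\cdot\nabla)\u$ --- which is responsible for the time derivatives lying only in $\mathrm{L}^{2-\gamma}$, $\mathrm{L}^{2-\delta}$ in time, and which makes both the structural reorganization above and the polynomial-growth conditions on $\F$ indispensable for closing the estimates and for the limit passage.
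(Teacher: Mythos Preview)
The paper does not contain a proof of this statement: Theorem~\ref{exist} is only stated, with the proof delegated entirely to Theorem~1 and Corollaries~1--2 of the cited reference~\cite{weak}. Your outline is nonetheless correct and is precisely the Faedo--Galerkin scheme carried out in~\cite{weak}: Galerkin approximation via Stokes/Neumann eigenbases, the energy balance obtained by testing with $(\u_n,\mu_n)$, coercivity of $\mathscr{E}$ from Assumption~\ref{prop of F and J}(3), the $\mathrm{L}^2(0,T;\mathrm{V})$ bound on $\varphi_n$ via Assumption~\ref{prop of F and J}(2), control of the mean of $\mu_n$ via Assumption~\ref{prop of F and J}(4), the rewriting of the Korteweg term modulo a gradient to obtain the time-derivative bounds, Aubin--Lions compactness, and Vitali's theorem for the nonlinear potential terms. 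Your identification of the energy \emph{identity} (as opposed to inequality) as the most delicate point is also accurate: this is exactly what Corollary~2 of~\cite{weak} addresses, and it does require the additional regularity that makes $\u$ and $\mu$ admissible test functions --- which in~\cite{weak} is shown to hold in dimension two.
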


	\begin{remark}\label{rem2.5}
		 We denote by $\mathbb{Q}$ a continuous monotone increasing function with respect to each of its arguments. As a consequence of energy inequality (\ref{energy}), we have the following bound:
		\begin{align}
		&	\|\u\|_{\mathrm{L}^{\infty}(0,T;\G_{\text{div}}) \cap \mathrm{L}^2(0,T;\V_{\text{div}})} + \|\varphi \|_{\mathrm{L}^{\infty}(0,T;\mathrm{H}) \cap \mathrm{L}^2(0,T;\mathrm{V})} + \|\F(\varphi)\|_{\mathrm{L}^{\infty}(0,T;\mathrm{H})} \nonumber\\&\qquad\leq \mathbb{Q}\left(\mathscr{E}(\u_0,\varphi_0),\|\mathbf{f}\|_{\mathrm{L}^2(0,T;\V_{\text{div}}')}\right),
		\end{align}
		where $\mathbb{Q}$ also depends on $\F$, $\J$, $\nu$ and $\Omega$. The above theorem also implies that $\u\in\C([0,T];\G_{\text{div}})$ and $\varphi\in \C([0,T];\mathrm{H})$.
	\end{remark}
	
	\begin{theorem}[Uniqueness, Theorem 2, \cite{unique}]\label{unique}
		Suppose that the Assumption \ref{prop of F and J} is satisfied. Let $\u_0 \in \G_{\text{div}}$, $\varphi_0 \in \mathrm{H}$ with $\F(\varphi_0) \in \mathrm{L}^1(\Omega)$ and $\mathbf{f} \in \mathrm{L}^2_{\text{loc}}([0,\infty);\V_{\text{div}}')$ be given. Then, the weak solution $(\u,\varphi)$ corresponding to $(\u_0,\varphi_0)$ and given by Theorem \ref{exist} is unique. 
		
	\end{theorem}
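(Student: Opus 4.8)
The plan is a continuous-dependence (energy) estimate on the difference of two solutions. Let $(\u_1,\varphi_1,\mu_1)$ and $(\u_2,\varphi_2,\mu_2)$ be two weak solutions with the same data $(\u_0,\varphi_0,\mathbf f)$, and put $\u:=\u_1-\u_2$, $\varphi:=\varphi_1-\varphi_2$, $\mu:=\mu_1-\mu_2$. Integrating \eqref{nonlin phi} over $\Omega$ and using $\partial_{\mathbf n}\mu=0$ shows that the spatial mean of each $\varphi_i(t)$ stays equal to $\overline{\varphi_0}$, so $\overline{\varphi(t)}=0$ for a.e.\ $t$; hence $\varphi(t)$ lies in the zero-mean subspace of $\mathrm V'$, and we may apply to it the inverse Neumann Laplacian $\mathcal N$ (the map sending $g$ with $\overline g=0$ to the zero-mean solution of $-\Delta u=g$, $\partial_{\mathbf n}u|_{\partial\Omega}=0$) and measure $\varphi$ in the weak norm $\|\varphi\|_*^2:=\|\nabla\mathcal N\varphi\|^2=(\varphi,\mathcal N\varphi)$. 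This detour is forced upon us: $\varphi_t\in\mathrm L^{2-\delta}(0,T;\mathrm V')$ lies strictly below $\mathrm L^2(0,T;\mathrm V')$, so the concentration equation cannot be tested with $\varphi$ itself, whereas $\mathcal N\varphi$ is regular enough for $\langle\varphi_t,\mathcal N\varphi\rangle$ to make sense and equal $\tfrac12\tfrac{\mathrm d}{\mathrm dt}\|\varphi\|_*^2$ (after a routine regularization justifying the chain rule).

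Subtracting the two systems, I test the momentum difference with $\u$ and the concentration difference with $\mathcal N\varphi$, and add. The left-hand side becomes
\[
\tfrac12\tfrac{\mathrm d}{\mathrm dt}\big(\|\u\|^2+\|\varphi\|_*^2\big)+\nu\|\nabla\u\|^2+\big(a\varphi+\F'(\varphi_1)-\F'(\varphi_2),\varphi\big),
\]
and by the mean value theorem together with Assumption \ref{prop of F and J}(2) the last term equals $\int_\Omega(\F''(\xi)+a)\varphi^2\,\mathrm dx\ge C_0\|\varphi\|^2$ — the one coercive gain available for absorbing lower order contributions. On the right one is left with: the Navier--Stokes trilinear remainder $b(\u,\u_1,\u)$, bounded by Ladyzhenskaya's inequality as $\|\u\|_{\mathbb L^4}^2\|\nabla\u_1\|\le C\|\u\|\|\nabla\u\|\|\nabla\u_1\|\le\tfrac{\nu}{4}\|\nabla\u\|^2+C\|\nabla\u_1\|^2\|\u\|^2$; the transport terms $\int_\Omega(\u\cdot\nabla\mathcal N\varphi)\varphi_1\,\mathrm dx$ and $\int_\Omega(\u_2\cdot\nabla\mathcal N\varphi)\varphi\,\mathrm dx$, handled with Hölder, the elliptic estimate $\|\mathcal N\varphi\|_{\mathbb H^2}\le C\|\varphi\|$ (whence $\|\nabla\mathcal N\varphi\|_{\mathbb L^4}\le C\|\varphi\|_*^{1/2}\|\varphi\|^{1/2}$), interpolation and Young's inequality, which produce terms $\varepsilon(\|\nabla\u\|^2+\|\varphi\|^2)+K_1(t)(\|\u\|^2+\|\varphi\|_*^2)$ with $K_1\in\mathrm L^1(0,T)$; the convolution term $\int_\Omega(\nabla\J\ast\varphi)\cdot\nabla\mathcal N\varphi\,\mathrm dx\le\|\nabla\J\|_{\mathrm L^1}\|\varphi\|\,\|\varphi\|_*$ (see Remark \ref{remark J}); and, finally, the difference of the capillarity (Korteweg) forces, which after integrating by parts reads $\int_\Omega(\u\cdot\nabla\varphi_1)\,\mu\,\mathrm dx+\int_\Omega(\u\cdot\nabla\varphi)\,\mu_2\,\mathrm dx$, with $\mu=a\varphi-\J\ast\varphi+\F''(\xi)\varphi$.

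The hard part is this last, genuinely rougher coupling — exactly the difficulty that kept two-dimensional uniqueness open until \cite{unique}: the capillarity force can be less regular than the convective term, the factor $\F''(\xi)$ drags in the growth of the potential, and once one has tested with $\mathcal N\varphi$ the derivative factor $\nabla\varphi$ carries only an $\mathrm L^2(0,T)$ weight with no coercive counterpart, so a naive Hölder split yields a product of two quantities each merely in $\mathrm L^1(0,T)$. To close this, the plan is: (i) first record the bounds of Theorem \ref{exist} and Remark \ref{rem2.5} and upgrade them by exploiting the parabolic character of the concentration equation — in $\nabla\mu_i=(a+\F''(\varphi_i))\nabla\varphi_i+\varphi_i\nabla a-\nabla\J\ast\varphi_i$ the coefficient $a+\F''(\varphi_i)$ is bounded below by $C_0$, while Assumption \ref{prop of F and J}(3)--(4) controls the growth of $\F$ and hence the $\mathbb L^p$-summability of the $\varphi_i$, and the identity $\F'(\varphi_i)=\mu_i-a\varphi_i+\J\ast\varphi_i$ trades the potential nonlinearity for the controlled quantity $\mu_i\in\mathrm L^2(0,T;\mathrm V)$ — so as to gain the extra space--time integrability of $\varphi_i$ and $\mu_i$ needed below; (ii) then split each piece of the capillarity difference by Hölder, Ladyzhenskaya and Gagliardo--Nirenberg so that every derivative factor keeps an $\mathrm L^2(0,T)$ weight, every difference factor $\u,\varphi$ is retained either inside the coercive quantities $\nu\|\nabla\u\|^2$ or $C_0\|\varphi\|^2$ (to be absorbed with a small constant) or inside $\|\u\|^2,\|\varphi\|_*^2$, and the remaining norms of $\u_i,\varphi_i,\mu_i$ assemble into an $\mathrm L^1(0,T)$ coefficient. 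This produces
\[
\tfrac{\mathrm d}{\mathrm dt}\Lambda(t)\le K(t)\,\Lambda(t),\qquad \Lambda(t):=\|\u(t)\|^2+\|\varphi(t)\|_*^2,\quad K\in\mathrm L^1(0,T),\ \Lambda(0)=0,
\]
and Gronwall's lemma forces $\Lambda\equiv 0$ on $[0,T]$, i.e.\ $\u_1\equiv\u_2$ and $\varphi_1\equiv\varphi_2$; then $\mu_1=\mu_2$ by the definition of $\mu$. I expect the bookkeeping in step (ii) — keeping every term genuinely of Gronwall form while spending the single coercive gain $C_0\|\varphi\|^2$ economically — to be the only real obstacle, the remaining estimates being routine.
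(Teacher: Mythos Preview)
The paper does not prove this theorem at all: it is stated without proof and attributed to Theorem~2 of \cite{unique} (Frigeri, Gal, Grasselli). There is therefore no ``paper's own proof'' against which to compare your attempt.

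That said, your plan is essentially the strategy of \cite{unique}: test the velocity difference with $\u$, test the concentration difference with $\mathcal N\varphi$ in the $\mathrm V'$-norm (forced by the sub-$\mathrm L^2$ time integrability of $\varphi_t$), extract the coercive term $C_0\|\varphi\|^2$ from Assumption~\ref{prop of F and J}(2), and close via Gronwall. Your identification of the Korteweg/capillarity difference as the genuine obstruction, and of the need to upgrade the a~priori integrability of $\varphi_i,\mu_i$ using Assumptions~\ref{prop of F and J}(3)--(4) before splitting, is accurate and matches what \cite{unique} does. What you have written is a correct outline, not a proof: the actual work in \cite{unique} lies precisely in the bookkeeping you defer to step~(ii), where several of the individual estimates are delicate (in particular the term coming from $\int_\Omega(\u\cdot\nabla\varphi)\mu_2$, which requires the improved regularity $\varphi_i\in\mathrm L^4(0,T;\mathrm L^4)$ and $\mu_i\in\mathrm L^2(0,T;\mathrm V)$ together with a careful interpolation to land in $\mathrm L^1(0,T)$). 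If you intend to reproduce the argument rather than cite it, that step must be written out in full; as it stands your proposal is a faithful roadmap of the proof in \cite{unique} but not a substitute for it.
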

	
	For the further analysis of this paper, we also need the existence of a unique strong solution to the uncontrolled system  \eqref{nonlin phi}-\eqref{initial conditions}. The following theorem established in Theorem 2, \cite{strong} gives the existence and uniqueness of the strong solution  for the uncontrolled system  \eqref{nonlin phi}-\eqref{initial conditions}. 
	\begin{theorem}[Global Strong Solution, Theorem 2, \cite{strong}]\label{strongsol}
		Let $\mathbf{f}\in \mathrm{L}^2_{\text{loc}}([0,\infty);\G_{\text{div}})$, $\u_0\in\V_{\text{div}}$, $\varphi_0\in\mathrm{V}\cap\mathrm{L}^{\infty}(\Omega)$ be given and the Assumption \ref{prop of F and J} be satisfied. Then, for a given $T > 0$, there exists \emph{a unique weak solution} $(\u,\varphi)$ to the system \eqref{nonlin phi}-\eqref{initial conditions}(with $\U=0$) such that
		\begin{equation}\label{0.22}
		\left\{
		\begin{aligned}
		& \u\in \mathrm{L}^{\infty}(0,T;\V_{\text{div}})\cap \mathrm{L}^2(0,T;\H^2), \ \ \varphi\in \mathrm{L}^{\infty}((0,T) \times\Omega)\times \mathrm{L}^{\infty}(0,T;\mathrm{V}),\\ &\u_t\in\mathrm{L}^2(0,T;\G_{\text{div}}), \ \ \varphi_t\in\mathrm{L}^2(0,T;\mathrm{H}).
		\end{aligned}
		\right.
		\end{equation}
		Furthermore, suppose in addition that $\F \in\C^3(\mathbb{R}),\ a \in\mathrm{H}^2(\Omega)$ and that $\varphi_0\in \mathrm{H}^2(\Omega)$. Then, the uncontrolled system \eqref{nonlin phi}-\eqref{initial conditions} admits \emph{a unique strong solution} on $[0, T ]$ satisfying (\ref{0.22}) and also
		\begin{equation}
		\label{0.23}\left\{
		\begin{aligned}
		&\varphi \in\mathrm{L}^{\infty}(0,T;\mathrm{W}^{1,p}),\ 2\leq p<\infty,\\ &\varphi_t\in \mathrm{L}^{\infty}(0,T;\mathrm{H})\cap\mathrm{L}^2(0,T;\mathrm{V}).
		\end{aligned}
		\right.
		\end{equation}
		If $\J \in\mathrm{W}^{2,1}(\mathbb{R}^2;\R)$, we have in addition
		\begin{equation}\label{0.24}\varphi\in \mathrm{L}^{\infty}(0, T ; \mathrm{H}^2). \end{equation}
	\end{theorem}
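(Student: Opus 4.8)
The plan is to start from the unique weak solution furnished by Theorems \ref{exist} and \ref{unique} and to upgrade its regularity through a chain of higher-order a priori estimates, all performed rigorously on a Galerkin approximation based on the eigenfunctions of the Stokes operator (for $\u$) and of the Neumann Laplacian (for $\varphi$) and then passed to the limit, the limit being identified with that weak solution. The first and pivotal estimate is the uniform bound $\varphi\in\mathrm{L}^{\infty}((0,T)\times\Omega)$, obtained by a Moser--Alikakos iteration: testing \eqref{nonlin phi} with $|\varphi|^{p-2}\varphi$ the convective term drops out since $\mathrm{div}\,\u=0$, and writing $\nabla\mu=(a+\F''(\varphi))\nabla\varphi+\varphi\nabla a-\nabla\J\ast\varphi$, the coercivity $a+\F''(\varphi)\ge C_0$ from Assumption \ref{prop of F and J}(2) produces the good term $(p-1)C_0\int|\varphi|^{p-2}|\nabla\varphi|^2$, while the remaining lower-order and nonlocal contributions are absorbed by Young's inequality and Young's convolution inequality $\|\nabla\J\ast\varphi\|_{\mathrm{L}^p}\le\|\nabla\J\|_{\mathrm{L}^1}\|\varphi\|_{\mathrm{L}^p}$; combining with the $2$D Sobolev embedding applied to $|\varphi|^{p/2}$ and iterating in $p$ closes this step, using $\varphi_0\in\mathrm{L}^{\infty}(\Omega)$ and nothing on $\u$ beyond $\mathrm{div}\,\u=0$.

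Next I would test the momentum equation \eqref{nonlin u} (with $\U=0$) with the Stokes operator $\mathrm{A}\u:=-\mathrm{P}_{\G}\Delta\u$, where $\mathrm{P}_{\G}$ is the Helmholtz--Hodge projection. The trilinear term is estimated in $2$D by the Ladyzhenskaya and elliptic-regularity inequalities and half of it absorbed into $\nu\|\mathrm{A}\u\|^2$; the Korteweg force is rewritten as $\mu\nabla\varphi=\nabla(\mu\varphi)-\varphi\nabla\mu$, and since its gradient part is $\mathbb{L}^2$-orthogonal to the divergence-free field $\mathrm{A}\u$ it disappears, leaving $|(\varphi\nabla\mu,\mathrm{A}\u)|\le\|\varphi\|_{\mathrm{L}^{\infty}}\|\nabla\mu\|\,\|\u\|_{\H^2}$, which is absorbed because $\|\nabla\mu\|^2\in\mathrm{L}^1(0,T)$ by the energy estimate \eqref{energy} and $\|\varphi\|_{\mathrm{L}^{\infty}}$ is bounded by the previous step. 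A Gronwall argument with $\u_0\in\V_{\text{div}}$ yields $\u\in\mathrm{L}^{\infty}(0,T;\V_{\text{div}})\cap\mathrm{L}^2(0,T;\H^2)$, and reading $\u_t$ off from \eqref{nonlin u} (using these bounds, $\f\in\mathrm{L}^2(0,T;\G_{\text{div}})$, and the $\mathrm{L}^{\infty}$-bound on $\varphi$ for $\mathrm{P}_{\G}(\varphi\nabla\mu)$) gives $\u_t\in\mathrm{L}^2(0,T;\G_{\text{div}})$. In parallel, further energy estimates on \eqref{nonlin phi} --- testing with $\varphi_t$, exploiting the strict coercivity $a+\F''(\varphi)\ge C_0$ of the chemical potential, the $\mathrm{L}^{\infty}$-bound on $\varphi$, and the embedding $\mathrm{L}^2(0,T;\H^2)\hookrightarrow\mathrm{L}^2(0,T;\mathbb{L}^{\infty})$ for the convective term --- yield $\varphi_t\in\mathrm{L}^2(0,T;\mathrm{H})$, whence $\varphi\in\mathrm{L}^{\infty}(0,T;\mathrm{V})$ follows from the pointwise bound $C_0|\nabla\varphi|\le|\nabla\mu-\varphi\nabla a+\nabla\J\ast\varphi|$. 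These velocity and concentration estimates are coupled and should be run together inside a single Gronwall inequality; this establishes \eqref{0.22}, while uniqueness is already contained in Theorem \ref{unique}.

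For the strong-solution part, under the extra hypotheses $\F\in\C^3(\mathbb{R})$, $a\in\mathrm{H}^2(\Omega)$ and $\varphi_0\in\mathrm{H}^2(\Omega)$, I would differentiate \eqref{nonlin phi} in time and test with $\varphi_t$; since $\varphi\in\mathrm{L}^{\infty}$ and $\F\in\C^3$, the term $\partial_t\F''(\varphi)=\F'''(\varphi)\varphi_t$ is a bounded function times $\varphi_t$ and is controllable, giving $\varphi_t\in\mathrm{L}^{\infty}(0,T;\mathrm{H})\cap\mathrm{L}^2(0,T;\mathrm{V})$. Then $\Delta\mu=\varphi_t+\u\cdot\nabla\varphi\in\mathrm{L}^{\infty}(0,T;\mathrm{H})$, so elliptic regularity for the Neumann problem ($\partial\mu/\partial\mathbf{n}=0$) yields $\mu\in\mathrm{L}^{\infty}(0,T;\mathrm{H}^2)$, and inverting the chemical-potential relation as $\nabla\varphi=(a+\F''(\varphi))^{-1}(\nabla\mu-\varphi\nabla a+\nabla\J\ast\varphi)$ together with the $2$D embeddings $\mathrm{H}^2\hookrightarrow\mathrm{W}^{1,p}$ and $a\in\mathrm{W}^{1,p}$ for $p\in[2,\infty)$ gives \eqref{0.23}. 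Finally, if in addition $\J\in\mathrm{W}^{2,1}(\mathbb{R}^2;\R)$, expanding $\Delta\mu=(a+\F''(\varphi))\Delta\varphi+\varphi\Delta a+2\nabla a\cdot\nabla\varphi+\F'''(\varphi)|\nabla\varphi|^2-(\Delta\J)\ast\varphi$ and using $a+\F''(\varphi)\ge C_0$ with $\varphi\in\mathrm{L}^{\infty}(0,T;\mathrm{W}^{1,p})$, $a\in\mathrm{H}^2$ and $\J\in\mathrm{W}^{2,1}$ shows $\Delta\varphi\in\mathrm{L}^{\infty}(0,T;\mathrm{H})$; combined with the Robin-type boundary condition for $\varphi$ inherited from $\partial\mu/\partial\mathbf{n}=0$ and elliptic regularity this gives \eqref{0.24}.

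The principal obstacle throughout is the Korteweg (capillarity) force $\mu\nabla\varphi$: being genuinely of lower regularity than the Navier--Stokes convective term, it is tamed only by the simultaneous use of its gradient structure $\mu\nabla\varphi=\nabla(\mu\varphi)-\varphi\nabla\mu$ and of the $\mathrm{L}^{\infty}$-bound on $\varphi$, which is why that bound must be proved first. A secondary, pervasive difficulty is that every estimate must absorb at once the nonlocal convolution $\J\ast\varphi$ and the polynomially growing potential $\F'(\varphi)$; this is exactly where Assumption \ref{prop of F and J}, Young's convolution inequality, and --- for the $\mathrm{H}^2$-regularity of $\varphi$ --- the supplementary hypotheses $a\in\mathrm{H}^2$ and $\J\in\mathrm{W}^{2,1}$ enter.
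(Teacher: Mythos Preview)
The paper does not supply its own proof of this theorem: it is quoted verbatim as Theorem 2 of \cite{strong} and used as a black box, so there is nothing in the present manuscript to compare your argument against line by line. That said, your sketch is a faithful and essentially correct outline of the strategy carried out in \cite{strong}: the $\mathrm{L}^\infty$-bound on $\varphi$ via an Alikakos--Moser iteration (exploiting $a+\F''(\varphi)\ge C_0$ and the divergence-free velocity), the $\V_{\text{div}}$/$\H^2$ velocity estimate by testing with the Stokes operator and using the gradient structure $\mu\nabla\varphi=\nabla(\mu\varphi)-\varphi\nabla\mu$, the coupled Gronwall argument, time-differentiation of the $\varphi$-equation for \eqref{0.23}, and the elliptic inversion of the chemical-potential relation for \eqref{0.24} under $\J\in\mathrm{W}^{2,1}$. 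The emphasis you place on securing the $\mathrm{L}^\infty$-bound on $\varphi$ \emph{first}, so that the Korteweg term can be absorbed, is exactly the organizing principle of the original proof.
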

	
	\begin{remark}\label{rem2.12}
		The regularity properties given in (\ref{0.22})-(\ref{0.24}) imply that
		\begin{align}\label{ues}
		\u\in\C  ([0, T] ; \V_{\text{div}} )), \ \varphi \in \C ( [0, T ]; \mathrm{V} )  \cap \C_w ( [0, T ]; \mathrm{H}^ 2),
		\end{align}
		where $\C_w$ means continuity on the
		interval $(0,T)$ with values in the weak topology of $\mathrm{H}^2$. Moreover, we have strong continuity in time, that is,	
		\begin{equation}
		\varphi\in\C([0,T];\mathrm{H}^2).
		\end{equation}
	\end{remark}

	\subsection{Existence and uniqueness of the linearized system}\label{se3}\setcounter{equation}{0} 
	Let us linearize the equations \eqref{nonlin phi}-\eqref{initial conditions}  around $(\widehat{\u}, \widehat{\varphi})$ which is the \emph{unique weak solution} of system (\ref{nonlin phi})-(\ref{initial conditions}) with control term $\U=0$ (uncontrolled system) and external forcing $\widehat{\mathbf{f}}$ such that
	$$\widehat{\mathbf{f}}\in \mathrm{L}^2(0,T;\G_{\text{div}}),\ \widehat{\u}_0\in\V_{\text{div}},\ \widehat{\varphi}_0\in\mathrm{V}\cap\mathrm{L}^{\infty}(\Omega),$$ so that $(\widehat{\u}, \widehat{\varphi})$ has the regularity given in (\ref{0.22}). We consider the following linearized system (see \cite{BDM}):
	\begin{subequations}
		\begin{align}
		\w_t - \nu \Delta \w + (\w \cdot \nabla )\widehat{\u} + (\widehat{\u} \cdot \nabla )\w + \nabla \widetilde{\uppi}_\w &= -\nabla a\psi \widehat{\varphi} -(\J\ast \psi) \nabla \widehat{\varphi} - (\J \ast \widehat{\varphi}) \nabla \psi + \widetilde{\mathbf{f}} +\U, \label{lin w} \\
		\psi_t + \w\cdot \nabla \widehat{\varphi} + \widehat{\u} \cdot\nabla \psi &= \Delta \widetilde{\mu}, \label{lin psi} \\ 
		\widetilde{\mu} &= a \psi - \J\ast \psi + \F''(\widehat{\varphi})\psi, \label{lin mu}\\
		\text{div }\w &= 0, \label{lin div zero}\\
		\frac{\partial \widetilde{\mu}}{\partial\mathbf{n}} &= 0, \ \w=0  \ \text{on } \ \partial \Omega \times (0,T), \label{lin boundary conditions}\\
		\w(0) &= \w_0, \ \psi(0) = \psi _0 \ \text{ in } \ \Omega, \label{lin initial conditions}
		\end{align} 
	\end{subequations}
	where $ \widetilde{\uppi}_\w = \widetilde{\uppi}-(\F'(\widehat{\varphi})+a\widehat{\varphi})\psi$. 
	Next, we discuss the unique global solvability results for the system (\ref{lin w})-(\ref{lin initial conditions}).

	\begin{theorem}[Existence and uniqueness of linearized system]\label{linearized}
		Suppose that the Assumption \ref{prop of F and J} is satisfied. Let us assume $(\widehat{\u},\widehat{\varphi})$ be the unique weak solution of the system \eqref{nonlin phi}-\eqref{initial conditions} with the regularity given in (\ref{0.22}). 
		
		Let $\w_0 \in \G_{\text{div}}$ and $\psi_0 \in \mathrm{V}'$ with $\widetilde{\mathbf{f}} \in \mathrm{L}^2(0,T;\G_{\text{div}})$, $\U \in \mathrm{L}^2(0,T;\V_{\text{div}}')$.  Then for a given $T >0$, there exists \emph{a unique weak solution} $(\w,\psi)$ to the system \eqref{lin w}-\eqref{lin initial conditions} such that
		$$\w \in \mathrm{L}^{\infty}(0,T;\G_{\text{div}}) \cap \mathrm{L}^{2}(0,T;\V_{\text{div}})\ \text{ and 
		} \ \psi \in \mathrm{L}^{\infty}(0,T;\mathrm{V}') \cap  \mathrm{L}^{2}(0,T;\mathrm{H}).$$
	\end{theorem}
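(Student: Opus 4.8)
The plan is to establish existence of a weak solution via a Faedo–Galerkin approximation, derive the relevant a priori estimates uniformly in the approximation parameter, pass to the limit using compactness, and then prove uniqueness by an energy argument on the difference of two solutions. Since $\psi_0 \in \mathrm{V}'$ only, the natural approach is to work with the variable $\widetilde\mu$ (or equivalently with $\mathcal{B}^{-1}\psi$), exploiting that the principal part of the $\psi$-equation is a fourth-order-in-$\psi$ (second-order-in-$\widetilde\mu$) parabolic operator with coercivity coming from Assumption \ref{prop of F and J}(2), namely $\F''(\widehat\varphi)+a \geq C_0$. I would set up the Galerkin scheme using the eigenfunctions $\mathbf{e}_j$ of the Stokes operator for $\w$ and the eigenfunctions of the Neumann Laplacian $\mathcal B$ for $\psi$, obtaining a finite-dimensional linear ODE system which, being linear with $\mathrm{L}^2$-in-time coefficients, has a unique absolutely continuous solution on $[0,T]$.

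First, I would test the $\psi^n$-equation (the Galerkin projection of \eqref{lin psi}–\eqref{lin mu}) with $\mathcal B^{-1}\psi^n$: the term $\langle \psi^n_t, \mathcal B^{-1}\psi^n\rangle = \frac12\frac{\d}{\d t}\|\psi^n\|_{\mathrm{V}'}^2$ using \eqref{bes1}, while the chemical potential term yields $(\nabla\widetilde\mu^n,\nabla\mathcal B^{-1}\psi^n) = \langle\widetilde\mu^n,\psi^n\rangle = \int_\Omega (\F''(\widehat\varphi)+a)|\psi^n|^2 \d x - \int_\Omega (\J\ast\psi^n)\psi^n\d x \geq C_0\|\psi^n\|^2 - \|\J\|_{\mathrm{L}^1}\|\psi^n\|^2_{\mathrm{V}'}$ after a Young-type bound on the nonlocal term. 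The transport terms $\w^n\cdot\nabla\widehat\varphi$ and $\widehat\u\cdot\nabla\psi^n$ are handled by Hölder and the Ladyzhenskaya/Agmon bounds on $\widehat\u\in\mathrm{L}^\infty(0,T;\V_{\text{div}})\cap\mathrm{L}^2(0,T;\H^2)$ and $\widehat\varphi\in\mathrm{L}^\infty((0,T)\times\Omega)\cap\mathrm{L}^\infty(0,T;\mathrm{V})$, producing terms absorbable into $C_0\|\psi^n\|^2$ or controlled by $\|\w^n\|^2 + \|\psi^n\|_{\mathrm{V}'}^2$. Simultaneously I would test the $\w^n$-equation with $\w^n$: the bilinear Navier–Stokes terms give $b(\w^n,\widehat\u,\w^n)$ (bounded by $\|\w^n\|\|\nabla\w^n\|\|\nabla\widehat\u\|$ via \eqref{best}) and $b(\widehat\u,\w^n,\w^n)=0$, while the right-hand side coupling terms $-\nabla a\,\psi^n\widehat\varphi$, $-(\J\ast\psi^n)\nabla\widehat\varphi$, $-(\J\ast\widehat\varphi)\nabla\psi^n$ are estimated in $\V_{\text{div}}'$ in terms of $\|\psi^n\|_{\mathrm{V}'}$ using $\J\in\W^{1,1}$, the $\mathrm{L}^\infty$-bound on $\widehat\varphi$ and integration by parts to move derivatives off $\psi^n$. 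Adding the two estimates and applying Gronwall's inequality to $\|\w^n(t)\|^2 + \|\psi^n(t)\|_{\mathrm{V}'}^2 + C_0\int_0^t\|\psi^n\|^2$ yields the uniform bounds $\w^n$ bounded in $\mathrm{L}^\infty(0,T;\G_{\text{div}})\cap\mathrm{L}^2(0,T;\V_{\text{div}})$ and $\psi^n$ bounded in $\mathrm{L}^\infty(0,T;\mathrm{V}')\cap\mathrm{L}^2(0,T;\mathrm{H})$.

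Next I would bound the time derivatives: $\w^n_t$ in $\mathrm{L}^2(0,T;\V_{\text{div}}')$ and $\psi^n_t$ in $\mathrm{L}^2(0,T;\mathrm{V}')$ (or a slightly weaker space), directly from the equations and the uniform bounds just obtained, noting $\widetilde\mu^n$ is bounded in $\mathrm{L}^2(0,T;\mathrm{V})$ since $\|\nabla\widetilde\mu^n\|\in\mathrm{L}^2$ follows from testing $\psi^n_t$ equation appropriately and $\int_\Omega\widetilde\mu^n$ is controlled. Then Aubin–Lions–Simon gives strong convergence of (a subsequence of) $\w^n$ in $\mathrm{L}^2(0,T;\G_{\text{div}})$ and of $\psi^n$ in $\mathrm{L}^2(0,T;\mathrm{V}')$, which suffices to pass to the limit in all the (bi)linear terms — the only genuinely nonlinear-looking terms are products of an unknown with the fixed coefficients $\widehat\u,\widehat\varphi$, so weak–strong convergence pairs close them. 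For uniqueness, I would take the difference $(\overline\w,\overline\psi)$ of two solutions with the same data, test as above with $\overline\w$ and $\mathcal B^{-1}\overline\psi$, and obtain $\frac{\d}{\d t}\big(\|\overline\w\|^2+\|\overline\psi\|_{\mathrm{V}'}^2\big)\leq C(t)\big(\|\overline\w\|^2+\|\overline\psi\|_{\mathrm{V}'}^2\big)$ with $C\in\mathrm{L}^1(0,T)$, whence Gronwall forces $\overline\w=\overline\psi=0$. The main obstacle is the low regularity $\psi_0\in\mathrm{V}'$: one must consistently work in the $\mathrm{V}'$-topology for $\psi$, carefully exploit the coercivity constant $C_0$ from Assumption \ref{prop of F and J}(2) to absorb the transport and nonlocal terms, and verify that every coupling term between the $\w$- and $\psi$-equations can indeed be estimated using only $\|\psi\|_{\mathrm{V}'}$ (via integration by parts shifting spatial derivatives onto the smooth coefficients $a$, $\J$, $\widehat\varphi$), rather than $\|\psi\|_{\mathrm{H}}$, which is only available in the time-integrated sense.
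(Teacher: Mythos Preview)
The paper does not actually prove this theorem: it is stated without proof and the introduction explicitly refers the reader to \cite{BDM,BDM2} for the existence and uniqueness of weak solutions of the linearized (and adjoint) system. So there is no ``paper's own proof'' to compare against here.

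That said, your Faedo--Galerkin scheme with the test functions $\w^n$ and $\mathcal{B}^{-1}\psi^n$, the coercivity $\F''(\widehat\varphi)+a\geq C_0$ from Assumption~\ref{prop of F and J}(2) to produce the dissipation $C_0\|\psi^n\|^2$, Gronwall on $\|\w^n\|^2+\|\psi^n\|_{\mathrm{V}'}^2$, Aubin--Lions for compactness, and the uniqueness argument via the same energy identity on the difference, is exactly the standard route for this kind of linear coupled system and is almost certainly what is carried out in \cite{BDM}. One small overstatement in your sketch: the right-hand side coupling terms in the $\w$-equation (e.g.\ $(\J\ast\widehat\varphi)\nabla\psi^n$ after moving the gradient onto the other factors) are in general controlled by $\|\psi^n\|$ rather than by $\|\psi^n\|_{\mathrm{V}'}$ alone; but since $C_0\|\psi^n\|^2$ sits on the left as dissipation, this is harmless after Young's inequality and does not affect the Gronwall closure. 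With that caveat, your plan is sound.
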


	The regularity of the solution proved in Theorem \ref{linearized} is not sufficient for getting optimality condition. If we assume that $
	\F \in\C^3(\mathbb{R}),\ a \in\mathrm{H}^2(\Omega),$  then one can also prove that  $\psi\in\C([0,T];\mathrm{H})\cap\mathrm{L}^2(0,T;\mathrm{V})$.  Thus, we have 
	\begin{theorem}
		Suppose that the assumptions given in Theorem \ref{linearized} holds and assume that $
		\F \in\C^3(\mathbb{R}),\ a \in\mathrm{H}^2(\Omega).$
		Let $\w_0 \in \G_{\text{div}}$ and $\psi_0 \in \mathrm{V}'$ with $\widetilde{\mathbf{f}} \in \mathrm{L}^2(0,T;\G_{\text{div}})$, $\U \in \mathrm{L}^2(0,T;\V_{\text{div}}')$.  Then for a given $T >0$, there exists \emph{a unique weak solution} $(\w,\psi)$ to the system \eqref{lin w}-\eqref{lin initial conditions} such that
		$$\w \in \mathrm{L}^{\infty}(0,T;\G_{\text{div}}) \cap \mathrm{L}^{2}(0,T;\V_{\text{div}})\ \text{ and 
		} \ \psi \in \mathrm{L}^{\infty}(0,T;\mathrm{H}) \cap  \mathrm{L}^{2}(0,T;\mathrm{V}).$$
	\end{theorem}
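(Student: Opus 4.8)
The plan is to keep the weak solution $(\w,\psi)$ produced by Theorem~\ref{linearized}, whose existence and uniqueness are already established there, and to upgrade the regularity of the concentration component $\psi$ by carrying out the natural $\mathrm{H}$-energy estimate for \eqref{lin psi}. Since that solution has $\psi$ only in $\mathrm{L}^2(0,T;\mathrm{H})$, it is not an admissible test function in its own equation; I would therefore perform the estimate at the level of the Faedo--Galerkin approximations $(\w_m,\psi_m)$ used to construct the solution (projections onto the eigenfunctions of the Stokes operator for $\w_m$ and of the scalar Neumann Laplacian for $\psi_m$), derive the bound uniformly in $m$, and pass to the limit; by the uniqueness in Theorem~\ref{linearized} the limit is $(\w,\psi)$, so the bound transfers. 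The two additional hypotheses enter twice: on one hand $a\in\mathrm{H}^2(\Omega)\hookrightarrow\mathrm{L}^\infty(\Omega)$ with $\nabla a\in\mathbb{L}^p(\Omega)$ for every $p<\infty$ in dimension two, and, since $\widehat{\varphi}$ is bounded, $\F\in\C^3(\mathbb{R})$ gives $\F''(\widehat{\varphi}),\F'''(\widehat{\varphi})\in\mathrm{L}^\infty((0,T)\times\Omega)$; on the other hand these are precisely the assumptions under which the linearization point, being a strong solution, enjoys the higher regularity \eqref{0.23}, so in particular $\widehat{\varphi}\in\mathrm{L}^\infty(0,T;\mathrm{W}^{1,4})$.

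The key step is to test the Galerkin form of \eqref{lin psi} with $\psi_m$. The convective term $\int_\Omega(\widehat{\u}\cdot\nabla\psi_m)\psi_m\,\d x$ vanishes after an integration by parts because $\mathrm{div}\,\widehat{\u}=0$ and $\widehat{\u}=0$ on $\partial\Omega$, and substituting $\widetilde{\mu}$ from \eqref{lin mu} and using the Neumann condition \eqref{lin boundary conditions} produces the principal term $\int_\Omega\big(a(x)+\F''(\widehat{\varphi})\big)|\nabla\psi_m|^2\,\d x$, which by Assumption~\ref{prop of F and J}(2) is bounded below by $C_0\|\nabla\psi_m\|^2$. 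One is thereby led to an inequality of the schematic form
\begin{align*}
\frac{1}{2}\frac{\d}{\d t}\|\psi_m\|^2 + C_0\|\nabla\psi_m\|^2
&\le \big|\big(\nabla a\,\psi_m,\nabla\psi_m\big)\big| + \big|\big(\F'''(\widehat{\varphi})\,\nabla\widehat{\varphi}\,\psi_m,\nabla\psi_m\big)\big| \\
&\quad {}+ \big|\big((\nabla\J)\ast\psi_m,\nabla\psi_m\big)\big| + \big|\big(\w_m\cdot\nabla\widehat{\varphi},\psi_m\big)\big| .
\end{align*}
Each term on the right is then handled by H\"older's, Ladyzhenskaya's and Young's inequalities: the first two, using $\nabla a\in\mathbb{L}^4(\Omega)$ and $\nabla\widehat{\varphi}\in\mathrm{L}^\infty(0,T;\mathbb{L}^4)$, are dominated by $(C_0/4)\|\nabla\psi_m\|^2+g(t)\|\psi_m\|^2$; the nonlocal term by $\|(\nabla\J)\ast\psi_m\|\,\|\nabla\psi_m\|\le (C_0/4)\|\nabla\psi_m\|^2+C\|\nabla\J\|_{\mathrm{L}^1}^2\|\psi_m\|^2$; and the transport term by $\|\w_m\|_{\mathbb{L}^4}\|\nabla\widehat{\varphi}\|_{\mathbb{L}^4}\|\psi_m\|$, which is split into a contribution absorbed by the $\w_m$-energy and a $\|\psi_m\|^2$ term; here $g\in\mathrm{L}^1(0,T)$ depends only on $\nu$, $\Omega$, $\J$, $\|a\|_{\mathrm{H}^2(\Omega)}$, $\|\widehat{\varphi}\|_{\mathrm{L}^\infty(0,T;\mathrm{W}^{1,4})}$ and $\|\F''(\widehat{\varphi})\|_{\mathrm{L}^\infty}$, $\|\F'''(\widehat{\varphi})\|_{\mathrm{L}^\infty}$.

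Adding the elementary $\G_{\text{div}}$-energy estimate for $\w_m$ obtained by testing \eqref{lin w} with $\w_m$, which is already part of the proof of Theorem~\ref{linearized} and controls $\|\w_m\|_{\mathrm{L}^\infty(0,T;\G_{\text{div}})\cap\mathrm{L}^2(0,T;\V_{\text{div}})}$ uniformly in $m$, and then invoking Gr\"onwall's inequality, one obtains a bound for $\|\psi_m\|_{\mathrm{L}^\infty(0,T;\mathrm{H})\cap\mathrm{L}^2(0,T;\mathrm{V})}$, uniform in $m$, depending only on $\|\w_0\|$, $\|\psi_0\|$, $\|\widetilde{\mathbf{f}}\|_{\mathrm{L}^2(0,T;\G_{\text{div}})}$ and $\|\U\|_{\mathrm{L}^2(0,T;\V_{\text{div}}')}$. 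Weak-$*$ and weak compactness then give $\psi\in\mathrm{L}^{\infty}(0,T;\mathrm{H})\cap\mathrm{L}^{2}(0,T;\mathrm{V})$ for the unique weak solution; reading $\psi_t\in\mathrm{L}^2(0,T;\mathrm{V}')$ off \eqref{lin psi}--\eqref{lin mu} and invoking the Lions--Magenes lemma upgrades this to $\psi\in\C([0,T];\mathrm{H})$, while the uniqueness is inherited verbatim from Theorem~\ref{linearized}.

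The step I expect to be the main obstacle is the treatment of the two ``non-divergence'' coefficient terms $(\nabla a\,\psi_m,\nabla\psi_m)$ and $(\F'''(\widehat{\varphi})\,\nabla\widehat{\varphi}\,\psi_m,\nabla\psi_m)$: unlike the convective term they genuinely carry both a gradient of $\psi_m$ and a factor $\psi_m$, so absorbing a small multiple of $\|\nabla\psi_m\|^2$ while being left only with $\|\psi_m\|^2$ forces one to place $\nabla a$ and $\nabla\widehat{\varphi}$ into $\mathbb{L}^p(\Omega)$ for some $p>2$ and to interpolate $\|\psi_m\|_{\mathbb{L}^q}$ between $\mathrm{H}$ and $\mathrm{V}$; this is exactly what $a\in\mathrm{H}^2(\Omega)$ and the $\mathrm{W}^{1,p}$-regularity \eqref{0.23} of the strong-solution linearization point afford, and it is why the bare regularity \eqref{0.22} would not be enough. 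A secondary point deserving care is that the $\mathrm{L}^\infty(0,T;\mathrm{H})$ bound requires $\|\psi_0\|$ to be finite; if one insists on taking $\psi_0\in\mathrm{V}'$ only, the estimate is first run on $[\tau,T]$ starting from a time $\tau>0$ at which $\psi(\tau)\in\mathrm{H}$ (such times exist since $\psi\in\mathrm{L}^2(0,T;\mathrm{H})$ by Theorem~\ref{linearized}), and the parabolic smoothing then carries the bound down to $t=0^+$.
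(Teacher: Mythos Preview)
The paper does not actually supply a proof of this theorem: after Theorem~\ref{linearized} the authors simply remark that under the extra hypotheses $\F\in\C^3(\mathbb{R})$, $a\in\mathrm{H}^2(\Omega)$ ``one can also prove that $\psi\in\C([0,T];\mathrm{H})\cap\mathrm{L}^2(0,T;\mathrm{V})$'' and then state the theorem without argument or reference. So there is no paper proof to compare against; your proposal is effectively filling in what the authors left implicit.

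Your approach is the natural one and is sound. Testing the Galerkin form of \eqref{lin psi} with $\psi_m$, extracting the coercive piece $\int_\Omega(a+\F''(\widehat{\varphi}))|\nabla\psi_m|^2\ge C_0\|\nabla\psi_m\|^2$ from Assumption~\ref{prop of F and J}(2), and controlling the four remaining terms exactly as you describe (H\"older, Ladyzhenskaya, Young, Young's convolution inequality for the $\nabla\J\ast\psi_m$ piece) is precisely how this estimate is obtained. Your identification of the two ``non-divergence'' coefficient terms as the crux, and your observation that $a\in\mathrm{H}^2(\Omega)$ together with the strong-solution regularity $\widehat{\varphi}\in\mathrm{L}^\infty(0,T;\mathrm{W}^{1,4})$ from \eqref{0.23} are what make them tractable, is exactly right; with only \eqref{0.22} one would not be able to close.

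One caveat on your last paragraph: if genuinely $\psi_0\in\mathrm{V}'\setminus\mathrm{H}$, parabolic smoothing does \emph{not} give a uniform $\mathrm{L}^\infty(0,T;\mathrm{H})$ bound down to $t=0$; one typically gets $\|\psi(t)\|\le C t^{-1/2}\|\psi_0\|_{\mathrm{V}'}$, so only $\psi\in\mathrm{L}^\infty_{\mathrm{loc}}((0,T];\mathrm{H})$, and the stated conclusion $\psi\in\C([0,T];\mathrm{H})$ would fail at $t=0$. This is really an imprecision in the theorem as stated rather than in your argument: in every application in the paper (e.g.\ \eqref{5.29}) one has $\psi_0=0$, and the authors almost certainly intend $\psi_0\in\mathrm{H}$ here. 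Your energy estimate is correct under that reading; just be explicit that the Gr\"onwall step uses $\|\psi_0\|<\infty$ and do not oversell the smoothing workaround.
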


	\section{Optimal Control Problem}\label{se4}\setcounter{equation}{0}In this section, we formulate a distributed optimal control problem  as the minimization of a suitable cost functional subject to the controlled nonlocal Cahn-Hilliard-Navier-Stokes equations. The main aim  is to establish a first and second order necessary and sufficient optimality condition for the existence of an optimal control that minimizes the cost functional given below subject to the constraint  \eqref{nonlin phi}-\eqref{initial conditions}.
	We consider the following cost functional: 
	
	\begin{align}
	\label{cost}
	\mathcal{J}(\u,\varphi,\U) := \int_0^T [ g(t,\u(t))+ h(t,\varphi(t)) + l(\U(t))] \d t,\end{align}
	To study the optimality condition we need to ensure the existence of strong solution to \eqref{nonlin phi}-\eqref{initial conditions}.  Hence,  let us assume that 
	\begin{align}\label{fes}
	\F \in\C^3(\mathbb{R}),\ a \in\mathrm{H}^2(\Omega),
	\end{align}
	and the initial data  
	\begin{align}\label{initial}
	\u_0\in\V_{\text{div}}\text{ and }\varphi_0\in \mathrm{H}^2(\Omega).
	\end{align}  
	The embedding of $\mathrm{V}$ and $\mathrm{L}^{\infty}(\Omega)$ in $\mathrm{H}^{2}(\Omega)$, implies $\varphi_0\in \mathrm{V}\cap\mathrm{L}^{\infty}(\Omega)$. From now onwards, we assume that along with the Assumption \ref{prop of F and J} conditions (\ref{fes}) and (\ref{initial}) hold true. 
	\begin{definition}
		Let $\mathscr{U}_{\text{ad}}$ be a closed and convex  subset  consisting of controls $\U \in\mathrm{L}^{2}(0,T;\G_{\text{div}})$. 
	\end{definition}
	
	\begin{definition}[Admissible Class]\label{definition 1}
		The \emph{admissible class} $\mathscr{A}_{\text{ad}}$ of triples $(\u,\varphi,\U)$ is defined as the set of states $(\u,\varphi)$ with initial data given in (\ref{initial}), solving the system \eqref{nonlin phi}-\eqref{initial conditions} with control $\U \in \mathscr{U}_{ad},$ which is  a subspace of $ \mathrm{L}^2(0,T;\G_{\text{div}})$.  That is,
		\begin{align*}
		\mathscr{A}_{\text{ad}}:=\Big\{(\u,\varphi,\U) :(\u,\varphi)\text{ is \emph{a unique strong solution} of }\eqref{nonlin phi}\text{-}\eqref{initial conditions}  \text{ with control }\U\Big\}.
		\end{align*}
	\end{definition}
	In view of the above definition, the optimal control problem we are considering is  formulated as
	\begin{align}\label{control problem}\tag{OCP}
	\min_{ (\u,\varphi,\U) \in \mathscr{A}_{\text{ad}}}  \mathcal{J}(\u,\varphi,\U).
	\end{align}
	We call $(\u,\varphi,\U)\in\mathscr{A}_{\text{ad}},$ a \emph{feasible triplet} for the problem \eqref{control problem}.
	\begin{definition}[Optimal Solution]
		A solution to the Problem \eqref{control problem} is called an \emph{optimal solution} and the optimal triplet is denoted by $(\u^* ,\varphi^*, \U^*)$. The control $\U^*$ is called an \emph{optimal control}.
	\end{definition}

			Let us now recall the definition of subgradient  of a function and its subdifferential, which is useful in the sequel. Let $\X$ be a real Banach space, $\X'$ be its topological dual and $\langle\cdot,\cdot\rangle_{\X'\times\X}$ be the duality pairing between $\X'$ and $\X$. 
	\begin{definition}[Subgradient, Subdifferential] Let $f:\X\to(-\infty,\infty],$  a functional on $\X$. A linear functional $u'\in\X'$  is called \emph{subgradient} of $f$ at $u$ if $f(u)\neq +\infty$ and for all $v \in\X$
		$$f(v)\geq f(u)+\langle u',v-u\rangle_{\X'\times \X},$$ holds. The set of all subgradients of $f$ at u is called \emph{subdifferential} $\partial f(u)$ of $f$ at $u$.
	\end{definition}
	We say that $f$ is \emph{G\^ateaux differentiable} at $u$ in $\X$ if $\partial f(u)$ consists of exactly one element, which we denote by $f_u(u)$.   This is equivalent to the assertion that the limit $$\langle f_u(u),h\rangle_{\X'\times\X}=\lim_{\tau\to 0}\frac{f(u+\tau h)-f(u)}{\tau}=\frac{\d}{\d\tau}f(u+\tau h)\bigg|_{\tau=0},$$ exists for all $h\in\X$.

	\begin{assumption}\label{ass 4.1} Let us assume that 
		\begin{enumerate}
			\item $g:[0,T] \times \G_{\text{div}} \rightarrow \mathbb{R}^+$ is measurable in the first variable, $g(t,0) \in \mathrm{L}^{\infty}(0,T)$, and for $\gamma_1 >0$, there exists $C_{\gamma_1}>0$ independent of $t$ such that
			\begin{align*}
			|g(t,\u_1) - g(t,\u_2)| \leq C_{\gamma_1}\|\u_1 - \u_2\|, \ \text{ for all } \ t\in [0,T], \|\u_1\| + \|\u_2\| \leq \gamma_1.
			\end{align*} 
			Moreover, G\^ateaux derivatives $g_{\u}(t,\cdot)$ and $g_{\u\u}(t.\cdot)$ is continuous in $\G_{\text{div}}$ for all $t\in [0,T].$
			\item $h:[0,T] \times \H \rightarrow \mathbb{R}^+$ is measurable in the first variable, $h(t,0) \in \mathrm{L}^{\infty}(0,T)$, and for $\gamma_2>0$, there exists $C_{\gamma_2}>0$ independent of $t$ such that
			\begin{align*}
			|h(t,\varphi_1) - h(t,\varphi_2)| \leq C_{\gamma_2}\|\varphi_1 - \varphi_2\|, \ \text{ for all } \ t\in [0,T], \|\varphi_1\| + \|\varphi_2\| \leq \gamma_2.
			\end{align*} 
			Further, G\^ateaux derivatives $h_{\varphi}(t,\cdot)$ and $h_{\varphi\varphi}(t.\cdot)$ is continuous in $\mathrm{H}$ for all $t\in [0,T].$
			\item $l:\mathbb{G}_{\text{div}}\rightarrow (-\infty,+\infty]$ is convex and lower semicontinuous, G\^ateaux derivatives $l_{\U}(\cdot)$ and $l_{\U \U}(\cdot)$ is continuous in $\mathbb{G}_{\text{div}}$. Moreover, there exist $C_7>0$ and $C_8 \in \mathbb{R}$ such that $$l(\U) \geq C_7\|\U\|^2 - C_8,  \ \text{ for all } \ \U \in \mathbb{G}_{\text{div}}.$$
		\end{enumerate}
	\end{assumption}

\begin{theorem}[Existence of an Optimal Triplet, Theorem 4.5, \cite{BDM}]\label{optimal}
	Let the Assumptions \ref{ass 4.1} and \ref{prop of F and J} along with the condition (\ref{fes}) holds true and the initial data $(\u_0,\varphi_0)$ satisfying (\ref{initial}) be given. Then there exists at least one triplet  $(\u^*,\varphi^*,\U^*)\in\mathscr{A}_{\text{ad}}$  such that the functional $ \mathcal{J}(\u,\varphi,\U)$ attains its minimum at $(\u^*,\varphi^*,\U^*)$, where $(\u^*,\varphi^*)$ is the unique strong solution of \eqref{nonlin phi}-\eqref{initial conditions}  with the control $\U^*$.
\end{theorem}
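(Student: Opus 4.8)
The plan is to use the direct method of the calculus of variations. First I would check that the admissible class $\mathscr{A}_{\text{ad}}$ is nonempty: given any fixed admissible control, say $\U\equiv 0\in\mathscr{U}_{\text{ad}}$ (or any element of $\mathscr{U}_{\text{ad}}$, which is nonempty being closed and convex), Theorem \ref{strongsol} together with the standing hypotheses \eqref{fes} and \eqref{initial} produces a unique strong solution $(\u,\varphi)$ of \eqref{nonlin phi}--\eqref{initial conditions}, so $(\u,\varphi,\U)\in\mathscr{A}_{\text{ad}}$. Next I would observe that the cost functional is bounded below: by Assumption \ref{ass 4.1}, $g,h\geq 0$ and $l(\U)\geq C_7\|\U\|^2-C_8$, so $\mathcal{J}(\u,\varphi,\U)\geq -C_8T>-\infty$. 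Hence the infimum $m:=\inf_{\mathscr{A}_{\text{ad}}}\mathcal{J}$ is finite, and we may pick a minimizing sequence $(\u_n,\varphi_n,\U_n)\in\mathscr{A}_{\text{ad}}$ with $\mathcal{J}(\u_n,\varphi_n,\U_n)\downarrow m$.

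The coercivity bound $l(\U)\geq C_7\|\U\|^2-C_8$ gives $\|\U_n\|_{\mathrm{L}^2(0,T;\G_{\text{div}})}$ bounded, so along a subsequence (not relabelled) $\U_n\rightharpoonup\U^*$ weakly in $\mathrm{L}^2(0,T;\G_{\text{div}})$; since $\mathscr{U}_{\text{ad}}$ is closed and convex, it is weakly closed, so $\U^*\in\mathscr{U}_{\text{ad}}$. The corresponding strong solutions $(\u_n,\varphi_n)$ satisfy the uniform a priori estimates underlying Theorem \ref{strongsol} — the energy estimate of Theorem \ref{exist}/Remark \ref{rem2.5} plus the strong-solution bounds \eqref{0.22}--\eqref{0.24} — with constants depending only on $\mathscr{E}(\u_0,\varphi_0)$, the fixed data, and $\sup_n\|\U_n\|_{\mathrm{L}^2(0,T;\G_{\text{div}})}$. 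Thus $(\u_n)$ is bounded in $\mathrm{L}^{\infty}(0,T;\V_{\text{div}})\cap\mathrm{L}^2(0,T;\mathbb{H}^2)$ with $(\partial_t\u_n)$ bounded in $\mathrm{L}^2(0,T;\G_{\text{div}})$, and $(\varphi_n)$ is bounded in $\mathrm{L}^{\infty}((0,T)\times\Omega)\cap\mathrm{L}^{\infty}(0,T;\mathrm{V})\cap\mathrm{L}^{\infty}(0,T;\mathrm{H}^2)$ with $(\partial_t\varphi_n)$ bounded in $\mathrm{L}^2(0,T;\mathrm{H})$. By Banach--Alaoglu and the Aubin--Lions--Simon compactness lemma, a further subsequence satisfies $\u_n\rightharpoonup\u^*$ weakly-$*$ in the above spaces and strongly in $\mathrm{L}^2(0,T;\V_{\text{div}})$ (indeed in $C([0,T];\G_{\text{div}})$), and $\varphi_n\rightharpoonup\varphi^*$ weakly-$*$ with strong convergence in $\mathrm{L}^2(0,T;\mathrm{V})$ and a.e. in $\Omega\times(0,T)$.

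Passing to the limit in the weak formulation of \eqref{nonlin phi}--\eqref{initial conditions} for $(\u_n,\varphi_n,\U_n)$ is the crux. The linear terms pass by weak convergence; the troublesome nonlinear terms are the convective term $(\u_n\cdot\nabla)\u_n$, the transport term $\u_n\cdot\nabla\varphi_n$, the Korteweg term $\mu_n\nabla\varphi_n$, and the nonlinearity $\F'(\varphi_n)$ (equivalently $\F''(\varphi_n)$ appearing through $\mu_n$). Here the strong $\mathrm{L}^2(0,T;\V_{\text{div}})$ convergence of $\u_n$ handles $(\u_n\cdot\nabla)\u_n$ and $\u_n\cdot\nabla\varphi_n$ in the standard Navier--Stokes fashion; the a.e. convergence of $\varphi_n$ together with the uniform $\mathrm{L}^{\infty}$ bound and continuity of $\F',\F''$ (using $\F\in\C^3$) plus Vitali's theorem gives $\F'(\varphi_n)\to\F'(\varphi^*)$ and $\F''(\varphi_n)\to\F''(\varphi^*)$ in, say, $\mathrm{L}^2(\Omega\times(0,T))$, which in turn yields $\mu_n\rightharpoonup\mu^*=a\varphi^*-\J\ast\varphi^*+\F'(\varphi^*)$ weakly in $\mathrm{L}^2(0,T;\mathrm{V})$; combined with the strong convergence $\nabla\varphi_n\to\nabla\varphi^*$ in $\mathrm{L}^2$ this identifies the limit of the capillarity term. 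This shows $(\u^*,\varphi^*)$ is the (unique, by Theorem \ref{unique}) strong solution driven by $\U^*$, so $(\u^*,\varphi^*,\U^*)\in\mathscr{A}_{\text{ad}}$. Finally, lower semicontinuity of $\mathcal{J}$: for each fixed $t$, $\u\mapsto g(t,\u)$ and $\varphi\mapsto h(t,\varphi)$ are continuous (Lipschitz on bounded sets by Assumption \ref{ass 4.1}), and the strong $\mathrm{L}^2$-in-time convergence of $\u_n,\varphi_n$ together with Fatou's lemma gives $\int_0^T[g(t,\u^*)+h(t,\varphi^*)]\,\d t\leq\liminf\int_0^T[g(t,\u_n)+h(t,\varphi_n)]\,\d t$; while $l$ convex and lower semicontinuous on $\G_{\text{div}}$ implies $\int_0^T l(\U(t))\,\d t$ is convex and strongly lower semicontinuous, hence weakly sequentially lower semicontinuous, on $\mathrm{L}^2(0,T;\G_{\text{div}})$, so $\int_0^T l(\U^*)\,\d t\leq\liminf\int_0^T l(\U_n)\,\d t$. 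Adding, $\mathcal{J}(\u^*,\varphi^*,\U^*)\leq\liminf\mathcal{J}(\u_n,\varphi_n,\U_n)=m$, and since $(\u^*,\varphi^*,\U^*)$ is feasible, $\mathcal{J}(\u^*,\varphi^*,\U^*)=m$. I expect the main obstacle to be the compactness/limit-passage step — in particular securing strong convergence of $\varphi_n$ good enough to pass to the limit in the $\F$-nonlinearity and in the Korteweg force $\mu_n\nabla\varphi_n$ uniformly in $\U_n$; this is exactly where the uniform strong-solution estimates \eqref{0.22}--\eqref{0.24} of Theorem \ref{strongsol}, which hold because the controls are bounded in $\mathrm{L}^2(0,T;\G_{\text{div}})$, are indispensable.
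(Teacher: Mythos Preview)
The paper does not actually prove this theorem; it merely quotes it from \cite{BDM} (Theorem 4.5 there) without reproducing the argument. Your proposal follows the standard direct method of the calculus of variations, which is precisely the approach used in \cite{BDM}: coercivity of $l$ bounds the minimizing controls in $\mathrm{L}^2(0,T;\G_{\text{div}})$, the uniform strong-solution estimates of Theorem \ref{strongsol} bound the states, Aubin--Lions gives enough compactness to pass to the limit in the nonlinear terms, and weak lower semicontinuity of $\mathcal{J}$ closes the argument. So your outline is correct and matches the expected proof.

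One small quibble: you justify $\mathscr{U}_{\text{ad}}\neq\varnothing$ by ``being closed and convex,'' but the empty set is closed and convex; nonemptiness is an implicit standing assumption on $\mathscr{U}_{\text{ad}}$, not a consequence of those properties. Also, when you invoke the uniform bounds \eqref{0.22}--\eqref{0.24}, be explicit that the constants in the proof of Theorem \ref{strongsol} depend on the forcing only through its $\mathrm{L}^2(0,T;\G_{\text{div}})$ norm, so that the bound $\sup_n\|\mathbf{f}+\U_n\|_{\mathrm{L}^2(0,T;\G_{\text{div}})}<\infty$ suffices; this is true but worth stating since Theorem \ref{strongsol} as quoted does not display the dependence.
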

	
	\subsection{The adjoint system} We will characterize the optimal solution $(\u^*,\varphi^*,\U^*)$ to the problem (\ref{control problem}) via adjoint variables.
	The adjoint variables $(\p,\eta)$ satisfy the following adjoint system (see section 4.1 of \cite{BDM} for more details): 
	\begin{equation}\label{adj}
	\left\{
	\begin{aligned}
	-\p_t- \nu \Delta \p +(\p \cdot \nabla)\u + (\u \cdot \nabla)\p + \nabla \varphi \eta+\nabla q &=g_{\u}(t,\u),\\
	-\eta_t   +\J \ast (\p \cdot \nabla \varphi) - (\nabla{\J} \ast  \varphi)\cdot \p+ \nabla a\cdot \p \varphi 
	- \u \cdot\nabla \eta - a \Delta \eta&\\ + \J\ast \Delta \eta - \F''(\varphi)\Delta \eta &=h_{\varphi}(t,\varphi), \\ \text{div }\p&=0,\\ \p\big|_{\partial\Omega}=\eta\big|_{\partial \Omega}&=0,\\ \p(T,\cdot)=\eta(T,\cdot)&=0.
	\end{aligned}
	\right.
	\end{equation}
	Remember that $q:\Omega\to\R$ is also an unknown. We are considering the Dirichlet boundary condition for $\eta$  due to some technical difficulties (see Remark \ref{rem3.9}). The following theorem gives the unique solvability of the system (\ref{adj}). 
	\begin{theorem}[Existence and uniqueness of adjoint system, Theorem 4.4, \cite{BDM}]\label{adjoint}
		Let $(\u,\varphi)$ be a unique strong solution of the nonlinear system \eqref{nonlin phi}-\eqref{nonlin u}. Then, there exists \emph{a unique weak solution}\begin{align}\label{space}
		(\p,\eta)\in(\mathrm{C}([0,T];\G_{\text{div}})\cap\mathrm{L}^2(0,T;\V_{\text{div}}))\times (\mathrm{C}([0,T];\mathrm{H})\cap \mathrm{L}^2(0,T;\mathrm{V}))\end{align}  to the system \eqref{adj}. 
	\end{theorem}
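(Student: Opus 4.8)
The plan is to regard \eqref{adj} as a \emph{linear} system that is backward in time with homogeneous terminal data, and to solve it by the Faedo--Galerkin method, using throughout the extra regularity of the strong solution $(\u,\varphi)$ recorded in Theorem \ref{strongsol} and Remark \ref{rem2.12} --- in particular $\u\in\mathrm{L}^{\infty}(0,T;\V_{\text{div}})\cap\mathrm{L}^2(0,T;\mathbb{H}^2)$, $\varphi\in\mathrm{L}^{\infty}((0,T)\times\Omega)\cap\mathrm{L}^{\infty}(0,T;\mathrm{W}^{1,p})$ for every $2\le p<\infty$, together with the bounds on $\u_t$ and $\varphi_t$. First I would perform the change of variable $t\mapsto T-t$, which turns \eqref{adj} into an equivalent forward-in-time linear system for $(\p(T-\cdot),\eta(T-\cdot))$ with zero initial data; every coefficient ($\u$, $\varphi$, $\F''(\varphi)$, $\F'''(\varphi)$, $a$, $\nabla a$, $\J$, $\nabla\J$) is merely reflected in time and keeps the same space--time integrability. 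It is also convenient to rewrite the nonlocal second-order term in conservative form: since $\J$ is even and $\J\in\mathrm{W}^{1,1}$, one has, in the sense of distributions on $\Omega$, $\J\ast\Delta\eta=\mathrm{div}\big((\nabla\J)\ast\eta\big)$, so that after an integration by parts against a test function vanishing on $\partial\Omega$ (this is where the Dirichlet condition imposed on $\eta$ in \eqref{adj} is used to kill the boundary term) the whole diffusion part of the $\eta$-equation becomes $\int_\Omega\big[(a+\F''(\varphi))\nabla\eta-(\nabla\J)\ast\eta\big]\cdot\nabla\xi\,\d x+\int_\Omega\big(\nabla a+\F'''(\varphi)\nabla\varphi\big)\cdot\nabla\eta\,\xi\,\d x$. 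This is the weak formulation I would work with.

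For the Galerkin scheme I would use $\{\mathbf{e}_j\}\subset\D(\A)$, the eigenfunctions of the Stokes operator, for the $\p$-component, and $\{\xi_j\}$, the eigenfunctions of $-\Delta$ with homogeneous Dirichlet boundary condition, for the $\eta$-component (compatible with the Dirichlet condition on $\eta$). Writing $\p_n=\sum_{j=1}^n\alpha_j^n(t)\mathbf{e}_j$, $\eta_n=\sum_{j=1}^n\beta_j^n(t)\xi_j$ and projecting the (time-reversed) equations gives a linear first-order ODE system for $(\alpha^n,\beta^n)$ whose coefficients are integrable in $t$ (here $\u\in\mathrm{L}^2(0,T;\mathbb{H}^2)$, $\varphi\in\mathrm{L}^\infty$, etc.\ enter), hence, by Carath\'eodory's theorem, a unique absolutely continuous solution on $[0,T]$ with $\alpha^n(0)=\beta^n(0)=0$.

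The heart of the argument is a bound for $(\p_n,\eta_n)$ in $\mathrm{L}^\infty(0,T;\G_{\text{div}})\cap\mathrm{L}^2(0,T;\V_{\text{div}})$ and in $\mathrm{L}^\infty(0,T;\mathrm{H})\cap\mathrm{L}^2(0,T;\mathrm{V})$ respectively, uniform in $n$. Testing the $\p_n$-equation with $\p_n$ and the $\eta_n$-equation with $\eta_n$ and adding, the transport terms $b(\u,\p_n,\p_n)$ and $\int_\Omega(\u\cdot\nabla\eta_n)\eta_n\,\d x$ drop out because $\mathrm{div}\,\u=0$ and $\u|_{\partial\Omega}=0$; the diffusion terms yield $\nu\|\nabla\p_n\|^2$ and, by Assumption \ref{prop of F and J}(2), the coercive term $\int_\Omega(a+\F''(\varphi))|\nabla\eta_n|^2\,\d x\ge C_0\|\nabla\eta_n\|^2$. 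What remains is at most quadratic in $(\p_n,\eta_n)$: the backward-advection term $b(\p_n,\u,\p_n)$; the coupling terms $\int_\Omega(\nabla\varphi)\eta_n\cdot\p_n\,\d x$, $\int_\Omega(\nabla a\cdot\p_n)\varphi\,\eta_n\,\d x$, $\int_\Omega\big(\J\ast(\p_n\cdot\nabla\varphi)\big)\eta_n\,\d x$, $\int_\Omega\big((\nabla\J\ast\varphi)\cdot\p_n\big)\eta_n\,\d x$ and $\int_\Omega\big((\nabla\J)\ast\eta_n\big)\cdot\nabla\eta_n\,\d x$; the gradient-coefficient term $\int_\Omega\big(\nabla a+\F'''(\varphi)\nabla\varphi\big)\cdot\nabla\eta_n\,\eta_n\,\d x$; and the source terms $(g_{\u}(t,\u),\p_n)$, $(h_{\varphi}(t,\varphi),\eta_n)$, which are controlled via Assumption \ref{ass 4.1}. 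Each of these I would estimate with the two-dimensional Ladyzhenskaya/Gagliardo--Nirenberg inequalities ($\|\mathbf{v}\|_{\mathbb{L}^4}^2\le C\|\mathbf{v}\|\,\|\nabla\mathbf{v}\|$ and $\|w\|_{\mathrm{L}^4}^2\le C\|w\|\,\|w\|_{\mathrm{H}^1}$), Young's inequality for convolutions (bringing in $\|\J\|_{\mathrm{L}^1}$, $\|\nabla\J\|_{\mathrm{L}^1}$, $\|\nabla\J\ast\varphi\|_{\mathrm{L}^\infty}\le\|\nabla\J\|_{\mathrm{L}^1}\|\varphi\|_{\mathrm{L}^\infty}$), and the strong-solution regularity ($\|\u\|_{\V_{\text{div}}}$, $\|\nabla\varphi\|_{\mathrm{L}^4}$, $\|\varphi\|_{\mathrm{L}^\infty}$, $\|\nabla a\|_{\mathrm{L}^4}$, $\|\F'''(\varphi)\|_{\mathrm{L}^\infty}$ all lie in $\mathrm{L}^\infty(0,T)$); Young's inequality then absorbs the top-order factors $\|\nabla\p_n\|^2$ and $\|\nabla\eta_n\|^2$ into the left-hand side, leaving $\frac{\d}{\d t}\big(\|\p_n\|^2+\|\eta_n\|^2\big)\le \Phi(t)\big(\|\p_n\|^2+\|\eta_n\|^2\big)+\Psi(t)$ with $\Phi,\Psi\in\mathrm{L}^1(0,T)$, so Gr\"onwall's inequality closes the estimate. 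Using the equations once more one then bounds $\partial_t\p_n$ in $\mathrm{L}^2(0,T;\V_{\text{div}}')$ and $\partial_t\eta_n$ in $\mathrm{L}^2(0,T;\mathrm{V}')$.

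Finally I would extract weak-$*$ limits in $\mathrm{L}^\infty(0,T;\G_{\text{div}})$ and $\mathrm{L}^\infty(0,T;\mathrm{H})$, weak limits in $\mathrm{L}^2(0,T;\V_{\text{div}})$ and $\mathrm{L}^2(0,T;\mathrm{V})$, and --- via the Aubin--Lions--Simon lemma together with the $\partial_t$-bounds --- strong limits in $\mathrm{L}^2(0,T;\G_{\text{div}})$ and $\mathrm{L}^2(0,T;\mathrm{H})$; since the system is \emph{linear} in $(\p,\eta)$, this is more than enough to pass to the limit in every term and identify the limit as a weak solution of the time-reversed system. Undoing the change of variable and using the $\partial_t$-bounds (hence $\p\in\C([0,T];\G_{\text{div}})$, $\eta\in\C([0,T];\mathrm{H})$) gives a weak solution of \eqref{adj} in the class \eqref{space}, with $\p(T)=\eta(T)=0$. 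Uniqueness is immediate from linearity: the difference of two solutions solves the homogeneous system with zero data, and the very same energy estimate (now with $\Psi\equiv0$) plus Gr\"onwall forces it to vanish. I expect the main obstacle to be the a priori bound for $\eta$: unlike in the local model, the diffusion acting on $\eta$ is the nonlocal operator $-(a+\F''(\varphi))\Delta\eta+\J\ast\Delta\eta$, so one has to extract coercivity from $\F''+a\ge C_0$ while simultaneously showing that the convolution piece and the numerous $\varphi$-dependent coupling terms do not spoil it --- this is exactly where Assumption \ref{prop of F and J}, the conservative rewriting of $\J\ast\Delta\eta$, the Dirichlet condition on $\eta$, and (crucially) the strong-solution regularity $\varphi\in\mathrm{L}^{\infty}((0,T)\times\Omega)$, $\nabla\varphi\in\mathrm{L}^{\infty}(0,T;\mathrm{L}^p)$ --- rather than merely $\varphi\in\mathrm{L}^2(0,T;\mathrm{V})$ --- are indispensable.
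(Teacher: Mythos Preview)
The paper does not prove this theorem; it merely quotes it from \cite[Theorem~4.4]{BDM} and the higher regularity from \cite{BDM2}. So there is no ``paper's own proof'' to compare against here. That said, your Faedo--Galerkin sketch is the natural route and is, in outline, what one expects the cited proof to do: reverse time, approximate with Stokes and Dirichlet--Laplacian eigenfunctions, obtain the energy estimate by testing with $(\p_n,\eta_n)$, extract coercivity for the $\eta$-equation from $a+\F''(\varphi)\ge C_0$ after rewriting $\J\ast\Delta\eta=\mathrm{div}((\nabla\J)\ast\eta)$ and integrating by parts (the Dirichlet condition on $\eta$ kills the boundary term), control all cross terms with Ladyzhenskaya/Young and the strong-solution bounds on $(\u,\varphi)$, then pass to the limit and conclude uniqueness by linearity. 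The points you single out --- the need for $\F\in\C^3$ so that $\F'''(\varphi)\nabla\varphi$ makes sense, the role of $\varphi\in\mathrm{L}^\infty((0,T)\times\Omega)$ and $\nabla\varphi\in\mathrm{L}^\infty(0,T;\mathrm{L}^p)$, and the reason for imposing Dirichlet data on $\eta$ --- are exactly the ones flagged in the paper (cf.\ \eqref{fes} and Remark~\ref{rem3.9}). I see no gap in your plan.
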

	In Theorem 4.2, \cite{BDM2}, it has also been proved that $(\p,\eta)$ has the following regularity: 
	\begin{align}
	(\p,\eta)\in(\C([0,T];\V_{\text{div}})\cap\mathrm{L}^2(0,T;\H^2))\times (\C([0,T]; \mathrm{V})\cap \mathrm{L}^2(0,T;\mathrm{H}^2)).\end{align}

	\subsection{Pontryagin's maximum principle}
	In this subsection, we state the Pontryagin's maximum principle  for the  optimal control problem defined in \eqref{control problem}. Pontryagin Maximum principle  gives a first order necessary condition for the optimal control problem  \eqref{control problem}, which also characterize the optimal control in terms of the adjoint variables.

	The following minimum principle is satisfied by the optimal triplet $(\u^*,\varphi^*,\U^*)\in\mathscr{A}_{\text{ad}}$:
	\begin{eqnarray}
	l(\U^*(t))+( \p(t),\U^*(t) ) \leq l(\W)+( \p(t),\W ),
	\end{eqnarray} 
	for all  $\W\in\G_{\text{div}},$ and a.e. $t\in[0,T]$. 
	Equivalently the above minimum principle may be  written in terms of the \emph{Hamiltonian formulation}. 
	Let us first define the \emph{Lagrangian} by
	$$\mathscr{L}(\u,\varphi,\U) := g(t,\u(t))+ h(t,\varphi(t)) + l(\U(t)).$$
	We define the corresponding \emph{Hamiltonian} by
	$$\mathscr{H}(\u,\varphi,\U,\p,\eta):= \mathscr{L}(\u,\varphi,\U) +\langle \p, \mathscr{N}_1(\u,\varphi,\U) \rangle + \langle \eta, \mathscr{N}_2(\u,\varphi)\rangle,$$ where $\mathscr{N}_1$ and $\mathscr{N}_2$ are defined by 
	\begin{equation}\label{n1n2}
	\left\{
	\begin{aligned} 
	\mathscr{N}_1(\u,\varphi, \U) &:= \nu \Delta \u - (\u \cdot \nabla)\u - \nabla \widetilde{\uppi } -(\J\ast \varphi) \nabla \varphi -\nabla a\frac{\varphi^2}{2} +\U,  \\
	\mathscr{N}_2(\u,\varphi) &:= -\u\cdot \nabla \varphi + \Delta (a\varphi -\J\ast \varphi + \F'(\varphi)).
	\end{aligned}
	\right.
	\end{equation}
	Hence, we get the minimum principle as
	\begin{eqnarray}
	\mathscr{H}(\u^*(t),\varphi^*(t),\U^*(t),\p(t),\eta(t)) \leq \mathscr{H}(\u^*(t),\varphi^*(t),\mathrm{W},\p(t),\eta(t)),
	\end{eqnarray}
	for all $\mathrm{W} \in \G_{\text{div}}$, a.e., $t\in[0,T]$.

	\begin{theorem}[Pontryagin's minimum principle]\label{main}
		Let $(\u^*,\varphi^*,\U^*)\in\mathscr{A}_{\text{ad}}$ be the optimal solution of the Problem \ref{control problem}. Then there exists a unique weak solution $(\p,\eta)$ of the adjoint system \eqref{adj} and for almost every $t \in [0,T]$ and $\mathrm{W} \in \G_{\text{div}}$, we have 
		\begin{eqnarray}\label{3.41}
		l(\U^*(t))+( \p(t),\U^*(t) ) \leq l(\W)+( \p(t),\W ).  
		\end{eqnarray}
	\end{theorem}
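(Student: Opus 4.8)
The plan is to obtain \eqref{3.41} from a variational inequality produced by differentiating the cost functional $\mathcal{J}$ along admissible directions, and then to remove the linearized state with the help of the adjoint system \eqref{adj}; the existence and uniqueness of $(\p,\eta)$ is already furnished by Theorem \ref{adjoint}, so the real content is the minimum principle. First I would fix the optimal triplet $(\u^*,\varphi^*,\U^*)$ and an arbitrary $\W\in\mathscr{U}_{\text{ad}}$. Since $\mathscr{U}_{\text{ad}}$ is convex, $\U^\theta:=(1-\theta)\U^*+\theta\W=\U^*+\theta(\W-\U^*)$ is admissible for each $\theta\in(0,1]$; let $(\u^\theta,\varphi^\theta)$ be the associated unique strong solution of \eqref{nonlin phi}-\eqref{initial conditions}, which exists by Theorem \ref{strongsol} under \eqref{fes}-\eqref{initial}. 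Optimality of $(\u^*,\varphi^*,\U^*)$ then gives $\mathcal{J}(\u^\theta,\varphi^\theta,\U^\theta)\geq\mathcal{J}(\u^*,\varphi^*,\U^*)$ for all such $\theta$.

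The key step, and the one I expect to be the main obstacle, is G\^ateaux differentiability of the control-to-state map: as $\theta\to0^+$ one has $\theta^{-1}(\u^\theta-\u^*)\to\w$ and $\theta^{-1}(\varphi^\theta-\varphi^*)\to\psi$ in the natural energy classes, where $(\w,\psi)$ is the unique weak solution (Theorem \ref{linearized}, in the improved class available under \eqref{fes}) of the linearization \eqref{lin w}-\eqref{lin initial conditions} about $(\u^*,\varphi^*)$ with $\widetilde{\mathbf{f}}=0$, $\w_0=0$, $\psi_0=0$ and control $\W-\U^*$. I would establish this in two steps: \textbf{(i)} a Lipschitz-in-control stability bound $\|\u^\theta-\u^*\|_{\mathrm{L}^{\infty}(0,T;\G_{\text{div}})\cap\mathrm{L}^2(0,T;\V_{\text{div}})}+\|\varphi^\theta-\varphi^*\|_{\mathrm{L}^{\infty}(0,T;\mathrm{H})}\leq C\theta\|\W-\U^*\|_{\mathrm{L}^2(0,T;\G_{\text{div}})}$, obtained by subtracting two copies of \eqref{nonlin phi}-\eqref{initial conditions}, testing with the difference of states, and absorbing the convective and Korteweg-type terms using the strong-solution bounds \eqref{0.22}-\eqref{0.24}, the Ladyzhenskaya and Gagliardo-Nirenberg interpolation inequalities, and Gronwall's lemma; and \textbf{(ii)} an estimate for the remainders $\mathbf{r}^\theta:=\theta^{-1}(\u^\theta-\u^*)-\w$ and $\rho^\theta:=\theta^{-1}(\varphi^\theta-\varphi^*)-\psi$, which solve the same linearized system driven by a forcing that is quadratic in $(\u^\theta-\u^*,\varphi^\theta-\varphi^*)$ and hence, by \textbf{(i)}, tends to $0$; a second Gronwall argument then gives $\mathbf{r}^\theta,\rho^\theta\to0$. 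The delicate points here are the weak regularity of $\psi$ and the control of the capillarity term $\mu\nabla\varphi$ and of its linearization after differentiating.

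Granting the differentiability, I would divide $\mathcal{J}(\u^\theta,\varphi^\theta,\U^\theta)-\mathcal{J}(\u^*,\varphi^*,\U^*)\geq0$ by $\theta$ and pass to the limit. The Lipschitz and G\^ateaux-differentiability hypotheses on $g$ and $h$ in Assumption \ref{ass 4.1}, together with a dominated convergence argument, give $\theta^{-1}\int_0^T[g(t,\u^\theta)-g(t,\u^*)]\d t\to\int_0^T\langle g_{\u}(t,\u^*),\w\rangle\d t$ and the analogous limit for $h$, while convexity of $l$ gives $\theta^{-1}[l(\U^\theta(t))-l(\U^*(t))]\leq l(\W(t))-l(\U^*(t))$ for a.e. $t$. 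Hence
\[
\int_0^T\big[\langle g_{\u}(t,\u^*),\w\rangle+\langle h_{\varphi}(t,\varphi^*),\psi\rangle\big]\d t+\int_0^T\big[l(\W(t))-l(\U^*(t))\big]\d t\geq0\qquad\text{for all }\W\in\mathscr{U}_{\text{ad}}.
\]

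Finally, I would eliminate $(\w,\psi)$ via the adjoint system. Testing \eqref{lin w}-\eqref{lin mu} with $(\p,\eta)$ and \eqref{adj} with $(\w,\psi)$, integrating over $\Omega\times(0,T)$ and integrating by parts in space and time: all convective and coupling cross-terms cancel because \eqref{adj} is precisely the formal adjoint of the linearized operator \eqref{lin w}-\eqref{lin mu} (this is the computation of Section 4.1 of \cite{BDM}, legitimate thanks to the higher regularity of $(\p,\eta)$ recorded after Theorem \ref{adjoint}), the pressure terms drop since $\mathrm{div}\,\w=\mathrm{div}\,\p=0$, and all time-boundary contributions vanish since $\w(0)=0$, $\psi(0)=0$, $\p(T)=0$, $\eta(T)=0$. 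This yields the identity
\[
\int_0^T\big[\langle g_{\u}(t,\u^*),\w\rangle+\langle h_{\varphi}(t,\varphi^*),\psi\rangle\big]\d t=\int_0^T\big(\p(t),\W(t)-\U^*(t)\big)\d t,
\]
so the variational inequality becomes $\int_0^T[l(\U^*(t))+(\p(t),\U^*(t))]\d t\leq\int_0^T[l(\W(t))+(\p(t),\W(t))]\d t$ for every $\W\in\mathscr{U}_{\text{ad}}$. A standard localization at Lebesgue points of $t\mapsto l(\U^*(t))+(\p(t),\U^*(t))$ (replacing $\U^*$ by a constant competitor on a vanishing time interval) upgrades this integral inequality to the pointwise statement \eqref{3.41}, which completes the proof.
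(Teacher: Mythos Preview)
The paper does not actually prove this theorem: immediately after stating it, the authors write ``For a proof of the above theorem (first order necessary condition), see Theorem 4.7, \cite{BDM}.'' So there is no in-paper argument to compare your proposal against; the result is imported wholesale from \cite{BDM}.

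That said, your sketch is the standard route and is almost certainly what \cite{BDM} does: convex perturbation of the control, G\^ateaux differentiability of the control-to-state map via stability plus a remainder estimate, passage to the limit in the difference quotient of $\mathcal{J}$, and then the adjoint duality identity to trade $\int_0^T[\langle g_{\u}(t,\u^*),\w\rangle+\langle h_{\varphi}(t,\varphi^*),\psi\rangle]\d t$ for $\int_0^T(\p,\W-\U^*)\d t$. The convexity bound $\theta^{-1}[l(\U^\theta)-l(\U^*)]\leq l(\W)-l(\U^*)$ is used in the right direction, and the cancellation in the duality pairing is exactly why \eqref{adj} is set up as it is.

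One point worth flagging: your final localization step (``replacing $\U^*$ by a constant competitor on a vanishing time interval'') tacitly assumes that such needle variations remain in $\mathscr{U}_{\text{ad}}$. The paper only says $\mathscr{U}_{\text{ad}}$ is a closed convex subset of $\mathrm{L}^2(0,T;\G_{\text{div}})$, which by itself is not enough; one needs either $\mathscr{U}_{\text{ad}}=\mathrm{L}^2(0,T;\G_{\text{div}})$ or a pointwise-in-time constraint structure $\U(t)\in K$ a.e.\ for some $K\subset\G_{\text{div}}$. Since the theorem is stated for arbitrary $\W\in\G_{\text{div}}$, the intended setting is presumably the unconstrained one (or $K=\G_{\text{div}}$), in which case your localization goes through. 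This is a hypothesis-level issue rather than a flaw in your argument.
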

	Furthermore, we can write \eqref{3.41} as
	\begin{eqnarray}
	( -\p(t), \mathrm{W} -\U^*(t)) \leq l(\W) -  l(\U^*(t)),
	\end{eqnarray}
	a.e. $t \in [0,T]\text{ and }\mathrm{W} \in \G_{\text{div}}$. In the above form, we see that $-\p \in \partial  l(\U^*(t))$, where $\partial$ denotes the subdifferential. Whenever $l$ is G\^ateaux differentiable, it follows that
	\begin{equation}\label{nec}
	-\p(t) = l_{\U}(\U^*(t)), \ \text{ a.e. }\  t \in [0,T]. 
	\end{equation}
	If $l$ is not G\^ateaux differentiable then, we have $-\p \in \partial  l(\U^*(t))$. For a proof of the above theorem (first order necessary condition), see Theorem 4.7, \cite{BDM}.
	\subsection{Second order necessary and sufficient optimality condition}
	In this subsection we derive the second order necessary and sufficient optimality condition for the optimal control problem \eqref{control problem}. 
	
	Let $(\widehat{\u},\widehat{\varphi},\widehat{\U})$ be an arbitrary feasible triplet for the optimal control problem \eqref{control problem}, we set
	\begin{align}\label{def Q}
	\mathcal{Q}_{\widehat{\u},\widehat{\varphi},\widehat{\U}}= \{(\u,\varphi,\U) \in \mathcal{A}_{\text{ad}} \} - \{(\widehat{\u},\widehat{\varphi},\widehat{\U})\},
	\end{align}
	which denotes the differences of all feasible triplets for the problem \eqref{control problem} corresponding to $(\widehat{\u},\widehat{\varphi},\widehat{\U})$.
	\begin{theorem}[Necessary condition]\label{necessary}
		Let  $(\u^*,\varphi^*,\U^*)$ be an optimal triplet for the problem \eqref{control problem}. Suppose the Assumption \ref{ass 4.1} holds and the adjoint variables $(\p,\eta)$ satisfies the adjoint system \eqref{adj}. Then for any $(\u,\varphi,\U) \in \mathcal{Q}_{\u^*,\varphi^*,\U^*}$, there exist $0\leq \theta_1,\theta_2,\theta_3,\theta_4\leq1$ such that
		\begin{align}\label{3.14}
		&\int_0^T (g_{\u\u}(t,\u^*+\theta_1\u)\u,\u) \d t + \int_0^T ( h_{\varphi\varphi}(t,\varphi^*+\theta_2\varphi)\varphi,\varphi) \d t + \int_0^T ( l_{\U\U}(t,\U^*+\theta_3\U)\U,\U) \d t \nonumber \\ 
		&\quad-2\int_0^T \big( (\u \cdot \nabla )\u+ \frac{\nabla a}{2} \varphi^2 +(\J\ast \varphi) \nabla \varphi, \p \big) \d t - 2\int_0^T (\u \cdot \nabla \varphi,\eta ) \d t \nonumber \\
		&\quad+\int_0^T \big((\F'''(\varphi^*+ \theta_4 \varphi)\varphi^2,\Delta\eta\big) \d t \geq 0.
		\end{align}
	\end{theorem}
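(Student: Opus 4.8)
The plan is to compare the optimal cost $\mathcal{J}(\u^*,\varphi^*,\U^*)$ with $\mathcal{J}(\u,\varphi,\U)$ along an arbitrary feasible direction, expand to second order, and then use the adjoint system \eqref{adj} together with the first order condition \eqref{nec} to eliminate the first order part. First I would fix $(\u,\varphi,\U)\in\mathcal{Q}_{\u^*,\varphi^*,\U^*}$, so that $(\u,\varphi,\U)=(\widehat{\u},\widehat{\varphi},\widehat{\U})-(\u^*,\varphi^*,\U^*)$ for some feasible triplet $(\widehat{\u},\widehat{\varphi},\widehat{\U})\in\mathscr{A}_{\text{ad}}$, both $(\widehat{\u},\widehat{\varphi})$ and $(\u^*,\varphi^*)$ being strong solutions of \eqref{nonlin phi}-\eqref{initial conditions} with the same initial data \eqref{initial}. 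Subtracting the two copies of the state system and using the elementary identities $(\widehat{\u}\cdot\nabla)\widehat{\u}-(\u^*\cdot\nabla)\u^*=(\u\cdot\nabla)\u^*+(\u^*\cdot\nabla)\u+(\u\cdot\nabla)\u$, $\widehat{\varphi}^2-(\varphi^*)^2=2\varphi^*\varphi+\varphi^2$, $(\J\ast\widehat{\varphi})\nabla\widehat{\varphi}-(\J\ast\varphi^*)\nabla\varphi^*=(\J\ast\varphi)\nabla\varphi^*+(\J\ast\varphi^*)\nabla\varphi+(\J\ast\varphi)\nabla\varphi$, together with the Taylor expansion with Lagrange remainder $\F'(\varphi^*+\varphi)-\F'(\varphi^*)=\F''(\varphi^*)\varphi+\tfrac12\F'''(\varphi^*+\theta_4\varphi)\varphi^2$ (legitimate since $\F\in\C^3$ and $\varphi^*,\varphi\in\mathrm{L}^\infty((0,T)\times\Omega)$ by Theorem \ref{strongsol}), one checks that $(\u,\varphi)$ solves the linearized system \eqref{lin w}-\eqref{lin initial conditions} around $(\u^*,\varphi^*)$ with $\widetilde{\mathbf{f}}=0$, control $\U$ and zero initial data, but carrying two extra quadratic right hand sides, namely $-Q_1:=-\big[(\u\cdot\nabla)\u+\tfrac{\nabla a}{2}\varphi^2+(\J\ast\varphi)\nabla\varphi\big]$ added to \eqref{lin w} and $-Q_2:=-\big[\u\cdot\nabla\varphi-\tfrac12\Delta(\F'''(\varphi^*+\theta_4\varphi)\varphi^2)\big]$ added to \eqref{lin psi}. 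All these terms are meaningful in view of the strong solution regularity \eqref{0.22}-\eqref{0.24} and of the enhanced regularity $(\p,\eta)\in(\C([0,T];\V_{\text{div}})\cap\mathrm{L}^2(0,T;\H^2))\times(\C([0,T];\mathrm{V})\cap\mathrm{L}^2(0,T;\mathrm{H}^2))$ of the adjoint state from \cite{BDM2}.

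Next I would test the $\u$- and $\varphi$-equations for the difference against $\p$ and $\eta$ respectively, test the two equations of \eqref{adj} against $\u$ and $\varphi$, and subtract. The time derivative contributions cancel because $\int_0^T[\langle\u_t,\p\rangle+\langle\p_t,\u\rangle]\,\d t=[(\u,\p)]_0^T=0$ and $\int_0^T[\langle\varphi_t,\eta\rangle+\langle\eta_t,\varphi\rangle]\,\d t=[(\varphi,\eta)]_0^T=0$, using $\u(0)=\varphi(0)=0$ and $\p(T)=\eta(T)=0$; and all the linear spatial terms cancel because, by Theorem \ref{adjoint} (Theorem 4.4, \cite{BDM}), the system \eqref{adj} is exactly the adjoint of the linearized operator, the boundary contributions being controlled by $\partial\mu/\partial\mathbf{n}=0$ for the state and $\eta|_{\partial\Omega}=0$ for the adjoint. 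What survives is the identity $\int_0^T[(g_{\u}(t,\u^*),\u)+(h_{\varphi}(t,\varphi^*),\varphi)]\,\d t=\int_0^T[(\p,\U)-(Q_1,\p)-(Q_2,\eta)]\,\d t$. Integrating by parts in the piece of $(Q_2,\eta)$ carrying $\Delta(\F'''(\varphi^*+\theta_4\varphi)\varphi^2)$ moves the Laplacian onto $\eta$; this is exactly where the term $\int_0^T(\F'''(\varphi^*+\theta_4\varphi)\varphi^2,\Delta\eta)\,\d t$ of \eqref{3.14} originates, and it is well defined precisely because $\eta\in\mathrm{L}^2(0,T;\mathrm{H}^2)$.

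Finally, by optimality $\mathcal{J}(\widehat{\u},\widehat{\varphi},\widehat{\U})\ge\mathcal{J}(\u^*,\varphi^*,\U^*)$; expanding $g$, $h$, $l$ to second order with Lagrange remainder (using Assumption \ref{ass 4.1}) and multiplying by $2$ gives, for suitable $\theta_1,\theta_2,\theta_3\in[0,1]$ (pointwise in $t$), the inequality $0\le 2\int_0^T[(g_{\u}(t,\u^*),\u)+(h_{\varphi}(t,\varphi^*),\varphi)+(l_{\U}(\U^*),\U)]\,\d t+\int_0^T[(g_{\u\u}(t,\u^*+\theta_1\u)\u,\u)+(h_{\varphi\varphi}(t,\varphi^*+\theta_2\varphi)\varphi,\varphi)+(l_{\U\U}(\U^*+\theta_3\U)\U,\U)]\,\d t$. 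Substituting the identity from the previous step for the $(g_{\u},h_{\varphi})$ terms, and using the first order necessary condition \eqref{nec}, $l_{\U}(\U^*)=-\p$, to cancel the $\int_0^T(\p,\U)\,\d t$ contribution, the whole first order part collapses to $-2\int_0^T[(Q_1,\p)+(Q_2,\eta)]\,\d t$, which is precisely the cluster of cubic-in-$(\u,\varphi)$ terms displayed in \eqref{3.14}; rearranging yields the claimed inequality.

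I expect the main obstacle to be the second step: justifying rigorously that, when the quadratically perturbed linearized system is paired with $(\p,\eta)$ and the system \eqref{adj} is paired with $(\u,\varphi)$, \emph{every} linear term — in particular the nonlocal convolution terms $\J\ast(\cdot)$, the terms involving $\nabla a$ and $\F''(\varphi^*)$, and the accompanying boundary integrals — cancels exactly, and that each pairing (most delicately $(\F'''(\varphi^*+\theta_4\varphi)\varphi^2,\Delta\eta)$ and $((\u\cdot\nabla)\u,\p)$) is finite in the function spaces supplied by Theorem \ref{strongsol} and \cite{BDM2}. A secondary, more routine, point is arguing the measurability in $t$ of the intermediate points $\theta_1,\theta_2,\theta_3,\theta_4$ produced by the pointwise Taylor expansions.
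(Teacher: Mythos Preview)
Your proposal is correct and follows essentially the same route as the paper: derive the perturbed linearized system satisfied by the difference $(\u,\varphi)$, pair it with the adjoint $(\p,\eta)$, use integration by parts together with the adjoint equations \eqref{adj} and the first order condition \eqref{nec} to obtain the identity \eqref{5.32}, and then combine this with the second order Taylor expansion of the cost and the optimality inequality $\mathcal{J}(\u^*+\u,\varphi^*+\varphi,\U^*+\U)\ge\mathcal{J}(\u^*,\varphi^*,\U^*)$. Your framing of the duality step as ``test the state equation with $(\p,\eta)$, test the adjoint equation with $(\u,\varphi)$, subtract'' is just a repackaging of the paper's single integration by parts, and your remarks on the regularity needed for $(\F'''(\varphi^*+\theta_4\varphi)\varphi^2,\Delta\eta)$ and on the Dirichlet condition for $\eta$ match Remark~\ref{rem3.9}.
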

	\begin{proof}
		For any $(\u,\varphi,\U)\in \mathcal{Q}_{\u^*,\varphi^*,\U^*},$ by  \eqref{def Q} there exist $(\z,\xi,\W) \in \mathcal{A}_{\text{ad}}$ such that $(\u,\varphi,\U)= (\z - \u^*,\xi - \varphi^*,\W - \U^*)$. So from \eqref{nonlin phi}-\eqref{initial conditions}, we can derive that $(\u,\varphi)$ satisfies the following system:
		\begin{eqnarray}\label{5.29}
		\left\{
		\begin{aligned}
		{\u}_t - \nu \Delta \u + (\u \cdot \nabla )\u^* &+ (\u^* \cdot \nabla )\u +(\J\ast \varphi) \nabla \varphi^*+ (\J\ast \varphi^*) \nabla \varphi+\nabla a \varphi \varphi^* + \nabla \widetilde{\uppi}_{\u} \\&= -(\u \cdot \nabla )\u- \frac{\nabla a}{2} \varphi^2 -(\J\ast \varphi) \nabla \varphi + \U, \ \text{ in } \ \Omega\times(0,T),\\
		{\varphi}_t + \u \cdot \nabla \varphi + \u^* \cdot\nabla \varphi + \u \cdot \nabla \varphi^* &= \Delta \widetilde{\mu}, \ \text{ in } \ \Omega\times(0,T), \\ 
		\widetilde{\mu} &= a \varphi - \J\ast \varphi +\F'(\varphi^*+  \varphi) - \F'(\varphi^*) ,\ \text{ in } \ \Omega\times(0,T),  \\
		\text{div }\u &= 0, \ \text{ in } \ \Omega\times(0,T), \\
		\frac{\partial \widetilde{\mu}}{\partial\mathbf{n}} &= 0, \ \u=0,  \ \text{on } \ \partial \Omega \times (0,T),\\
		\u(0) &= 0, \ \varphi(0) = 0, \ \text{ in } \ \Omega.
		\end{aligned}
		\right.
		\end{eqnarray}
		From now on we use the Taylor series expansion of $\mathrm{F}'$ around $\varphi^*$. There exists a $\theta_4$;  $0<\theta_4<1$ such that 
		\begin{align}\label{3.15}
		\F'(\varphi^*+  \varphi) - \F'(\varphi^*) =\F''(\varphi^*)  \varphi + \F'''(\varphi^*+ \theta_4 \varphi)\frac{\varphi^2}{2}.
		\end{align}
		Taking inner product of \eqref{5.29} with $(\p,\eta)$, integrating over $[0,T]$ and then adding, we get
		\begin{align}
		&\int_0^T ({\u}_t - \nu \Delta \u + (\u \cdot \nabla )\u^* + (\u^* \cdot \nabla )\u +(\J\ast \varphi) \nabla \varphi^*+ (\J\ast \varphi^*) \nabla \varphi+\nabla a \varphi \varphi^* + \nabla \widetilde{\uppi}_{\u},\p ) \d t \nonumber\\  & \quad+ \int_0^T ( (\u \cdot \nabla )\u+ \frac{\nabla a}{2} \varphi^2 +(\J\ast \varphi) \nabla \varphi - \U, \p ) \d t\nonumber\\&\quad+ \int_0^T ( {\varphi}_t + \u \cdot \nabla \varphi + \u^* \cdot\nabla \varphi + \u \cdot \nabla \varphi^* - \Delta \widetilde{\mu},\eta ) \d t =0 .
		\end{align}
		Using an integration by parts, we further get
		\begin{align}\label{3.18}
		&\int_0^T ( \u,-\p_t- \nu \Delta \p +(\p \cdot \nabla)\u^* + (\u^* \cdot \nabla)\p + \nabla \varphi^* \eta+\nabla q ) \d t\nonumber\\& + \int_0^T ( \varphi, -\eta_t  +\J \ast (\p \cdot \nabla \varphi^*) - (\nabla{\J} \ast  \varphi^*)\cdot \p + \nabla a\cdot \p \varphi^* - \u^* \cdot\nabla \eta)\d t\nonumber\\& + \int_0^T ( \varphi,  - a \Delta \eta + \J\ast \Delta \eta - \F''(\varphi^*)\Delta \eta )\d t+ \int_0^T (\u \cdot \nabla \varphi,\eta ) \d t\nonumber\\&+ \int_0^T ( (\u \cdot \nabla )\u+ \frac{\nabla a}{2} \varphi^2 +(\J\ast \varphi) \nabla \varphi - \U, \p ) \d t  -\int_0^T ((\F'''(\varphi^*+ \theta_4 \varphi)\frac{\varphi^2}{2},\Delta\eta) \d t =0.
		\end{align}
		Since $(\u^*,\varphi^*,\U^*)$ is an optimal triplet, it satisfies the first order necessary conditions given in \eqref{nec}. This and the adjoint system \eqref{adj} implies 
		\begin{align}\label{5.32}
		& \int_0^T ( l_{\U}(\U^*),\U) \d t +\int_0^T ( g_{\u}(t,\u^*),\u ) \d t+ \int_0^T ( h_{\varphi}(t,\varphi^*),\varphi ) \d t \nonumber\\&\quad
		+\int_0^T ( (\u \cdot \nabla )\u+ \frac{\nabla a}{2} \varphi^2 +(\J\ast \varphi) \nabla \varphi , \p ) \d t + \int_0^T (\u \cdot \nabla \varphi,\eta ) \d t \nonumber \\
		& \quad-\int_0^T ((\F'''(\varphi^*+ \theta_4 \varphi)\frac{\varphi^2}{2},\Delta\eta) \d t=0.
		\end{align}
		Since $(\u,\varphi,\U) \in \mathcal{Q}_{\u^*,\varphi^*,\U^*}$, by \eqref{def Q}, we have $(\u+\u^*,\varphi+\varphi^*,\U+\U^*)$ is a feasible triplet for the problem \eqref{control problem}. We obtain that
		\begin{align*}
		&\mathcal{J}(\u+\u^*,\varphi+\varphi^*,\U+\U^*) - \mathcal{J}(\u^*,\varphi^*,\U^*) \nonumber\\&= \int_0^T [ g(t,\u+\u^*)+ h(t,\varphi+\varphi^*) + l(\U+\U^*)] \d t \nonumber \\
		&\quad   - \int_0^T [ g(t,\u^*)+ h(t,\varphi^*) + l(\U^*)] \d t \geq 0.
		\end{align*} 
		Using the Taylor series expansion and Assumption \ref{ass 4.1} for the cost functional, there exist $0\leq \theta_1,\theta_2,\theta_3\leq 1$ such that 
		\begin{align}
		&\int_0^T \left[( g_{\u}(t,\u^*),\u ) + \frac{1}{2}( g_{\u\u}(t,\u^*+\theta_1\u)\u,\u)\right] \d t  \nonumber\\&\quad+ \int_0^T \left[( h_{\varphi}(t,\varphi^*),\varphi )+ \frac{1}{2}( h_{\varphi\varphi}(t,\varphi^*+\theta_2\varphi)\varphi,\varphi)\right] \d t  \nonumber \\
		&\quad +\int_0^T \left[( l_{\U}(\U^*),\U ) + \frac{1}{2}( l_{\U\U}(\U^*+\theta_3\U)\U,\U)\right] \d t \geq 0.
		\end{align}
		From \eqref{5.32}, it follows that
		\begin{align*}
		&\int_0^T \frac{1}{2}( g_{\u\u}(t,\u^*+\theta_1\u)\u,\u) \d t + \int_0^T  \frac{1}{2}( h_{\varphi\varphi}(t,\varphi^*+\theta_2\varphi)\varphi,\varphi) \d t  
		\int_0^T  \frac{1}{2}( l_{\U\U}(\U^*+\theta_3\U)\U,\U) \d t \nonumber \\ 
		& \quad-\int_0^T ( (\u \cdot \nabla )\u+ \frac{\nabla a}{2} \varphi^2 +(\J\ast \varphi) \nabla \varphi , \p ) \d t - \int_0^T (\u \cdot \nabla \varphi,\eta ) \d t \nonumber\\&\quad+\int_0^T ((\F'''(\varphi^*+ \theta_4 \varphi)\frac{\varphi^2}{2},\Delta\eta) \d t \geq 0,
		\end{align*}
		which completes the proof.
	\end{proof}
	\begin{remark}\label{rem3.9}
		Note that we assumed Dirichlet boundary condition on $\eta$ in \eqref{adj}. One can perform integration by parts in (\ref{3.18}) and obtain  the term $(\F'''(\varphi^*+ \theta_4 \varphi)\frac{\varphi^2}{2},\Delta\eta)$, which is well-defined. 
	\end{remark}
	
	\begin{theorem}[Sufficient condition]\label{sufficient}
		Suppose that Assumption \ref{ass 4.1} holds and $(\u^*,\varphi^*,\U^*)$ be a feasible triplet for the problem \eqref{control problem}. Let us assume that the first order necessary condition holds true (see  \eqref{nec}), and for any $0\leq \theta_1,\theta_2,\theta_3,\theta_4\leq 1$ and $(\u,\varphi,\U) \in \mathcal{Q}_{\u^*,\varphi^*,\U^*},$ the following inequality holds:
		\begin{align}\label{3.21}
		&\int_0^T (g_{\u\u}(t,\u^*+\theta_1\u)\u,\u) \d t + \int_0^T ( h_{\varphi\varphi}(t,\varphi^*+\theta_2\varphi)\varphi,\varphi) \d t + \int_0^T ( l_{\U\U}(t,\U^*+\theta_3\U)\U,\U) \d t \nonumber \\ 
		&\quad-2\int_0^T \big( (\u \cdot \nabla )\u+ \frac{\nabla a}{2} \varphi^2 +(\J\ast \varphi) \nabla \varphi, \p \big) \d t -2\int_0^T (\u \cdot \nabla \varphi,\eta ) \d t \nonumber \\
		&\quad+\int_0^T \big((\F'''(\varphi^*+ \theta_4 \varphi)\varphi^2,\Delta\eta\big) \d t \geq 0.
		\end{align}
		Then $(\u^*,\varphi^*,\U^*)$ is an optimal triplet for the problem \eqref{control problem}.
	\end{theorem}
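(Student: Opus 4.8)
The plan is to run the argument in the proof of Theorem \ref{necessary} in reverse. The key observation is that the identity \eqref{5.32} — which expressed the first-order necessary condition \eqref{nec} together with the adjoint system \eqref{adj} in integrated form — holds for \emph{any} feasible triplet $(\u^*,\varphi^*,\U^*)$ satisfying \eqref{nec}, not merely for an optimal one; its derivation used only that $(\u,\varphi)\in\mathcal{Q}_{\u^*,\varphi^*,\U^*}$ solves \eqref{5.29}, the Taylor expansion \eqref{3.15}, integration by parts \eqref{3.18}, and the equations \eqref{nec}, \eqref{adj}. So I would first re-derive \eqref{5.32} for the given feasible triplet. Fix an arbitrary competitor $(\z,\xi,\W)\in\mathcal{A}_{\text{ad}}$ and set $(\u,\varphi,\U)=(\z-\u^*,\xi-\varphi^*,\W-\U^*)\in\mathcal{Q}_{\u^*,\varphi^*,\U^*}$; then $(\u,\varphi)$ solves \eqref{5.29}, and taking the inner product with the adjoint pair $(\p,\eta)$, integrating over $[0,T]$ and integrating by parts exactly as before yields \eqref{5.32}.

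Next I would expand the cost difference by Taylor's theorem. By Assumption \ref{ass 4.1}, the maps $g(t,\cdot)$, $h(t,\cdot)$, $l(\cdot)$ are twice Gâteaux differentiable with continuous second derivatives, so there exist $0\le\theta_1,\theta_2,\theta_3\le1$ (depending on $t$ and on the competitor) with
\begin{align*}
\mathcal{J}(\z,\xi,\W)-\mathcal{J}(\u^*,\varphi^*,\U^*)
&=\int_0^T\!\big[(g_{\u}(t,\u^*),\u)+\tfrac12(g_{\u\u}(t,\u^*+\theta_1\u)\u,\u)\big]\d t\\
&\quad+\int_0^T\!\big[(h_{\varphi}(t,\varphi^*),\varphi)+\tfrac12(h_{\varphi\varphi}(t,\varphi^*+\theta_2\varphi)\varphi,\varphi)\big]\d t\\
&\quad+\int_0^T\!\big[(l_{\U}(\U^*),\U)+\tfrac12(l_{\U\U}(\U^*+\theta_3\U)\U,\U)\big]\d t.
\end{align*}
Now substitute \eqref{5.32} to eliminate the three first-order terms $\int_0^T(g_{\u}(t,\u^*),\u)\,\d t+\int_0^T(h_{\varphi}(t,\varphi^*),\varphi)\,\d t+\int_0^T(l_{\U}(\U^*),\U)\,\d t$, replacing them by $-\int_0^T\big((\u\cdot\nabla)\u+\tfrac{\nabla a}{2}\varphi^2+(\J\ast\varphi)\nabla\varphi,\p\big)\d t-\int_0^T(\u\cdot\nabla\varphi,\eta)\,\d t+\int_0^T\big(\F'''(\varphi^*+\theta_4\varphi)\tfrac{\varphi^2}{2},\Delta\eta\big)\d t$. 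Multiplying through by $2$, the right-hand side becomes precisely the left-hand side of the assumed inequality \eqref{3.21}, hence $\mathcal{J}(\z,\xi,\W)-\mathcal{J}(\u^*,\varphi^*,\U^*)\ge0$. Since $(\z,\xi,\W)\in\mathcal{A}_{\text{ad}}$ was arbitrary, $(\u^*,\varphi^*,\U^*)$ is optimal.

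The one genuine subtlety — and the step I would be most careful about — is the quantifier order on the intermediate points $\theta_1,\dots,\theta_4$. In \eqref{5.32} the point $\theta_4$ arising from the Taylor expansion \eqref{3.15} of $\F'$ is determined by the competitor, and likewise $\theta_1,\theta_2,\theta_3$ in the cost expansion depend on the competitor; the hypothesis \eqref{3.21} is stated to hold \emph{for all} $\theta_i\in[0,1]$ and all $(\u,\varphi,\U)\in\mathcal{Q}_{\u^*,\varphi^*,\U^*}$, so it in particular covers the specific values produced by the two Taylor expansions above, and the argument closes. (One should also note, as in Remark \ref{rem3.9}, that the term $(\F'''(\varphi^*+\theta_4\varphi)\tfrac{\varphi^2}{2},\Delta\eta)$ is well defined because of the Dirichlet condition on $\eta$ and the extra regularity $\eta\in\mathrm{L}^2(0,T;\mathrm{H}^2)$, and that all the other pairings make sense thanks to the regularity \eqref{space} of $(\p,\eta)$ and of the strong solutions, exactly as needed to justify the integration by parts in \eqref{3.18}.) No coercivity or convexity beyond what is already assumed is required: unlike the usual second-order sufficient conditions, here the hypothesis is imposed directly as a global inequality over the whole feasible cone, so no strict positivity or "two-norm" gap analysis enters.
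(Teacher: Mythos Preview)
Your proposal is correct and follows essentially the same route as the paper's proof: both derive the system satisfied by the difference $(\u,\varphi,\U)=(\z-\u^*,\xi-\varphi^*,\W-\U^*)$, pair with the adjoint $(\p,\eta)$, integrate by parts to obtain the analogue of \eqref{5.32} (written in the paper as \eqref{5.35}), expand the cost difference by second-order Taylor, substitute, and invoke the hypothesis \eqref{3.21}. Your explicit remark on the quantifier order for the $\theta_i$ is a useful clarification that the paper leaves implicit.
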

	\begin{proof}
		For any $(\z,\xi,\W) \in \mathcal{A}_{\text{ad}}$, we have by \eqref{def Q} that, $(\z - \u^*,\xi - \varphi^*,\W - \U^*) \in \mathcal{Q}_{\u^*,\varphi^*,\U^*}$ and  it satisfies:
		\begin{eqnarray}\label{5.34}
		\left\{
		\begin{aligned}
		({\z - \u^*})_t &- \nu \Delta (\z - \u^*) + ((\z - \u^*) \cdot \nabla )\u^* + (\u^* \cdot \nabla )(\z - \u^*) +(\J\ast (\xi - \varphi^*)) \nabla \varphi^*   \\
		&\quad+ (\J\ast \varphi^*) \nabla (\xi - \varphi^*) +\nabla a (\xi - \varphi^*) \varphi^*+ \nabla \widetilde{\uppi}_{(\z - \u^*)} \\&= -((\z - \u^*) \cdot \nabla )(\z - \u^*)- \frac{\nabla a}{2} (\xi - \varphi^*)^2-(\J\ast (\xi - \varphi^*)) \nabla (\xi - \varphi^*) \\&\quad+ \W -\U^*, \ \text{ in }\ \Omega\times(0,T),\\
		{(\xi - \varphi^*)}_t &+ (\z - \u^*) \cdot \nabla (\xi - \varphi^*) + \u^* \cdot\nabla (\xi - \varphi^*) + (\z - \u^*) \cdot \nabla \varphi^* \\&= \Delta \widetilde{\mu},\ \text{ in }\ \Omega\times(0,T), \\ 
		\widetilde{\mu} &= a (\xi - \varphi^*) - \J\ast (\xi - \varphi^*) +\mathrm{F}'(\xi)-\mathrm{F}'(\varphi^*),\\
		\text{div }(\z - \u^*) &= 0,\ \text{ in }\ \Omega\times(0,T), \\
		\frac{\partial \widetilde{\mu}}{\partial\mathbf{n}} &= 0, \ (\z - \u^*)=0,  \ \text{ on } \ \partial \Omega \times (0,T),\\
		(\z - \u^*)(0) &= 0, \ (\xi - \varphi^*)(0) = 0, \ \text{ in } \ \Omega.
		\end{aligned}
		\right.
		\end{eqnarray}
		As we argued in \eqref{3.15}, there exists a $0\leq \widetilde{\theta}_4\leq 1$ such that 
		\begin{align*}
		\mathrm{F}'(\xi)-\mathrm{F}'(\varphi^*)= \F''(\varphi^*) (\xi - \varphi^*) + \F'''(\varphi^*+\widetilde{\theta}_4(\xi - \varphi^*)) \frac{(\xi - \varphi^*)^2}{2}.
		\end{align*}
		Now multiplying \eqref{5.34} with $(\p,\eta)$, integrating over $[0,T]$ and then adding, we get  
		\begin{align}
		& \int_0^T ( \z-\u^*,-\p_t- \nu \Delta \p +(\p \cdot \nabla)\u^* + (\u^* \cdot \nabla)\p + \nabla \varphi^* \eta+\nabla q ) \d t \nonumber\\&
		\quad+ \int_0^T ( \xi-\varphi^*, -\eta_t   +\J \ast (\p \cdot \nabla \varphi^*) - (\nabla{\J} \ast  \varphi^*)\cdot \p + \nabla a\cdot \p \varphi^* - \u^* \cdot\nabla \eta - a \Delta \eta)\d t\nonumber\\&\quad  + \int_0^T ( \xi-\varphi^*,\J\ast \Delta \eta - \F''(\varphi^*)\Delta \eta) \d t 
		\nonumber\\&\quad +
		\int_0^T ( ((\z - \u^*) \cdot \nabla )(\z - \u^*) + \frac{\nabla a}{2} (\xi - \varphi^*)^2+(\J\ast (\xi - \varphi^*)) \nabla (\xi - \varphi^*) -\W + \U^*, \p ) \d t \nonumber \\
		&\quad + \int_0^T ((\z - \u^*) \cdot \nabla (\xi - \varphi^*),\eta) \d t  - \int_0^T((\F'''(\varphi^*+\widetilde{\theta}_4(\xi - \varphi^*)) \frac{(\xi - \varphi^*)^2}{2}),\Delta\eta)\d t =0,
		\end{align}
		where we also performed an integration by parts. Since $(\u^*,\varphi^*,\U^*)$ satisfies the first order necessary condition, i.e., $-\p(t) = l_{\U}(\U^*(t)),  \text{ a.e. } t \in [0,T]$  and using the adjoint system \eqref{adj}, we further get
		\begin{align}\label{5.35}
		& \int_0^T ( \z-\u^*,g_{\u}(\u^*) ) \d t + \int_0^T ( \xi-\varphi^*, h_{\varphi}(\varphi^*) ) \d t + \int_0^T ( l_{\U}(\U^*), \W - \U^* ) \d t \nonumber\\
		&\quad+\int_0^T ( ((\z - \u^*) \cdot \nabla )(\z - \u^*) + \frac{\nabla a}{2} (\xi - \varphi^*)^2+(\J\ast (\xi - \varphi^*)) \nabla (\xi - \varphi^*) , \p ) \d t \nonumber \\
		&\quad + \int_0^T ((\z - \u^*) \cdot \nabla (\xi - \varphi^*),\eta ) \d t -\int_0^T((\F'''(\varphi^*+\widetilde{\theta}_4(\xi - \varphi^*)) \frac{(\xi - \varphi^*)^2}{2}),\Delta\eta)\d t= 0.
		\end{align} 
		Using the Assumption \ref{ass 4.1} and Taylor's formula, we get
		\begin{align}\label{3.25}
		&\int_0^T \left[ g(t,\z(t))+ h(t,\xi(t)) + l(\W(t))\right] \d t -  \int_0^T \left[ g(t,\u^*(t))+ h(t,\varphi^*(t)) + l(\U^*(t))\right] \d t\nonumber \\
		&= \int_0^T \left[( g_{\u}(t,\u^*),\z-\u^* ) + \frac{1}{2}( g_{\u\u}(t,\u^*+\widetilde{\theta_1}(\z-\u^*))(\z-\u^*),(\z-\u^*))\right] \d t\nonumber \\
		&\quad + \int_0^T \left[( h_{\varphi}(t,\varphi^*),\xi -\varphi^* ) + \frac{1}{2}( h_{\varphi\varphi}(t,\varphi^*+\widetilde{\theta_2}(\xi -\varphi^*))(\xi -\varphi^*),(\xi -\varphi^*))\right] \d t  \nonumber\\
		&\quad +\int_0^T \left[( l_{\U}(t,\U^*),\W -\U^*) + \frac{1}{2}( l_{\U\U}(t,\U^*+\widetilde{\theta_3}(\W -\U^*))(\W -\U^*),\W -\U^*)\right] \d t, 
		\end{align}
		for some $0\leq \widetilde{\theta}_1,\widetilde{\theta}_2,\widetilde{\theta}_3\leq 1$. Using \eqref{5.35} in \eqref{3.25}, we also obtain
		\begin{align*}
		&\int_0^T \left[ g(t,\z(t))+ h(t,\xi(t)) + l(\W(t))\right] \d t -  \int_0^T \left[ g(t,\u^*(t))+ h(t,\varphi^*(t)) + l(\U^*(t))\right] \d t \\
		&=- \int_0^T ( ((\z - \u^*) \cdot \nabla )(\z - \u^*) + \frac{\nabla a}{2} (\xi - \varphi^*)^2+(\J\ast (\xi - \varphi^*)) \nabla (\xi - \varphi^*) , \p ) \d t \\
		&\quad - \int_0^T ((\z - \u^*) \cdot \nabla (\xi - \varphi^*),\eta ) \d t + \int_0^T((\F'''(\varphi^*+\widetilde{\theta}_4(\xi - \varphi^*)) \frac{(\xi - \varphi^*)^2}{2}),\Delta\eta)\d t \\
		&\quad+ \frac{1}{2}( g_{\u\u}(t,\u^*+\widetilde{\theta_1}(\z-\u^*))(\z-\u^*),(\z-\u^*))] \d t \\
		&\quad + \frac{1}{2}( h_{\varphi\varphi}(t,\varphi^*+\widetilde{\theta_2}(\xi -\varphi^*))(\xi -\varphi^*),(\xi -\varphi^*))] \d t \\
		&\quad + \frac{1}{2}( l_{\U\U}(t,\U^*+\widetilde{\theta_3}(\W -\U^*))(\W -\U^*),\W -\U^*)] \d t \geq 0,
		\end{align*}
		using \eqref{3.21}. Therefore we get for any $(\z,\xi,\W) \in \mathcal{A}_{\text{ad}}$, the following inequality holds:
		$$\int_0^T [ g(t,\z(t))+ h(t,\xi(t)) + l(\W(t))] \d t \geq  \int_0^T [ g(t,\u^*(t))+ h(t,\varphi^*(t)) + l(\U^*(t))] \d t,$$
		which implies that the triplet $(\u^*,\varphi^*,\U^*)$ is an optimal triplet for the problem \eqref{control problem}. 
	\end{proof}
	\begin{example}
		Let us consider the cost functional considered in \cite{BDM}, given by
		\begin{align*}
		\mathcal{J}(\u,\varphi,\U) := \frac{1}{2} \int_0^T \|\u(t)\|^2 \d t+ \frac{1}{2} \int_0^T \|\varphi(t)\|^2 \d t+ \frac{1}{2} \int_0^T\|\U(t)\|^2\d t.\end{align*}
		Let us assume $(\u^*,\varphi^*,\U^*)$ is an optimal solution for \eqref{control problem}. The Pontryagin's maxixmum principle gives
		$$\U^*(t) = -\p(t), \ \text{ for a.e. } \ t\in [0,T].$$
		Note that the Assumption \ref{ass 4.1} holds true and the adjoint variable $(\p,\eta)$ satisfies the adjoint system \eqref{adj} with $g_{\u}(t,\u)=\u$ and $h_{\varphi}(t,\varphi)=\varphi$. Let us take $\mathrm{F}(s)=(s^2-1)^2$. Then for any $(\u,\varphi,\U) \in \mathcal{Q}_{\u^*,\varphi^*,\U^*}$, there exists $0<\theta<1$ such that
		\begin{align*}
		& \int_0^T \|\u(t)\|^2 \d t+  \int_0^T \|\varphi(t)\|^2 \d t+ \int_0^T\|\U(t)\|^2\d t +24 \int_0^T((\varphi^*+\theta \varphi)\varphi^2,\Delta\eta)\d t  \\ 
		&\quad-2\int_0^T ( (\u \cdot \nabla )\u+ \frac{\nabla a}{2} \varphi^2 +(\J\ast \varphi) \nabla \varphi - \U, \p ) \d t - 2\int_0^T (\u \cdot \nabla \varphi,\eta ) \d t\geq 0.
		\end{align*}
		If $\J \in\mathrm{W}^{2,1}(\mathbb{R}^2;\R),$ then $\varphi,\varphi^*\in\mathrm{C}([0,T];\mathrm{H}^2)$ and $(\Delta((\varphi^*+\theta \varphi)\varphi^2),\eta),$ is well-defined. In this case, one can work with the Neumann boundary condition for $\eta$, i.e, $\frac{\partial\eta}{\partial \mathbf{n}}=0$, on $\partial\Omega\times(0,T)$ in \eqref{adj}.
	\end{example}


\end{document}